\documentclass[11pt,reqno]{amsart}

\usepackage[utf8]{inputenc}
\usepackage[T1]{fontenc}
\usepackage{lmodern}
\usepackage{amsmath,amssymb,amsthm,amscd,mathtools}
\usepackage[mathscr]{euscript}
\usepackage{mathrsfs}
\usepackage{enumitem}
\usepackage{xcolor}
\usepackage[colorlinks=true,linkcolor=blue,citecolor=black,urlcolor=black]{hyperref}
\usepackage{aliascnt}
\usepackage[nameinlink,noabbrev]{cleveref}

\theoremstyle{plain}
\newtheorem{theorem}{Theorem}[section]

\newaliascnt{lemma}{theorem}
\newtheorem{lemma}[lemma]{Lemma}
\aliascntresetthe{lemma}
\crefname{lemma}{lemma}{lemmas}
\Crefname{lemma}{Lemma}{Lemmas}

\newaliascnt{proposition}{theorem}
\newtheorem{proposition}[proposition]{Proposition}
\aliascntresetthe{proposition}
\crefname{proposition}{proposition}{propositions}
\Crefname{proposition}{Proposition}{Propositions}

\newaliascnt{corollary}{theorem}

\aliascntresetthe{corollary}
\crefname{corollary}{corollary}{corollaries}
\Crefname{corollary}{Corollary}{Corollaries}

\theoremstyle{definition}
\newaliascnt{definition}{theorem}
\newtheorem{definition}[definition]{Definition}
\aliascntresetthe{definition}
\crefname{definition}{definition}{definitions}
\Crefname{definition}{Definition}{Definitions}

\newaliascnt{remark}{theorem}
\newtheorem{remark}[remark]{Remark}
\aliascntresetthe{remark}
\crefname{remark}{remark}{remarks}
\Crefname{remark}{Remark}{Remarks}

\newcommand{\B}{\mathscr{B}}
\newcommand{\A}{\mathscr{A}}
\newcommand{\K}{\mathscr{K}}
\newcommand{\Q}{\mathscr{Q}}
\newcommand{\Jop}{\mathscr{J}}
\newcommand{\Ssep}{\mathscr{X}}
\newcommand{\F}{\mathbb F}
\newcommand{\C}{\mathbb C}
\newcommand{\N}{\mathbb N}

\newcommand{\norm}[1]{ \| #1 \| }

\DeclareMathOperator{\dens}{dens}
\DeclareMathOperator{\spanop}{span}

\subjclass[2020]{Primary 47L10; Secondary 46H05, 46H10, 46B28, 47L20}

\keywords{Calkin algebra, Banach algebra, approximable operators, compact operators, AF $C^*$-algebra, separable-range ideal}

\begin{document}
\raggedbottom

\title[A Unital Banach Algebra Which Is Not a Calkin Algebra]{A Unital Banach Algebra Which Is Not a Calkin Algebra}

\author[A. Acuaviva]{Antonio Acuaviva}
\address{School of Mathematical Sciences,
Fylde College,
Lancaster University,
LA1 4YF,
United Kingdom} \email{ahacua@gmail.com}

\author[P. Acuaviva]{Pablo Acuaviva}
\address{Institute of Computer Science,
University of Bern,
Neubr\"uckstrasse 10,
3012 Bern,
Switzerland} \email{pablohacuaviva@gmail.com}

\date{\today}

\begin{abstract}
    Let $\A(X)$ and $\K(X)$ denote the ideals of approximable and compact operators on a Banach space $X$, respectively. We construct a unital Banach algebra $A$ of density character $\mathfrak c=2^{\aleph_0}$, with exactly one non-zero proper closed two-sided ideal, such that, for every Banach space $X$, the algebra $A$ is isomorphic to neither $\B(X)/\A(X)$ nor $\B(X)/\K(X)$. Thus $A$ is not a Calkin algebra under either of the two customary conventions.
\end{abstract}

\maketitle

\tableofcontents

\bigskip
\section{Introduction and organisation}\label{sec:introduction}

Given a Banach space $X$, let $\B(X)$ denote the Banach algebra of all bounded operators on $X$, let $\mathscr F(X)$ denote the finite-rank operators, and let
\begin{equation*}
    \A(X)=\overline{\mathscr F(X)}
\end{equation*}
be the ideal of approximable operators. The compact operators form a closed two-sided ideal $\K(X)$ of $\B(X)$, and
\begin{equation*}
    \A(X)\subseteq\K(X).
\end{equation*}
Equality holds whenever $X$ has the approximation property, but the inclusion may be strict in general. Accordingly, and following Horv\'ath and Kania \cite[Section~1]{HorvathKania2021}, we treat two customary definitions of the Calkin algebra simultaneously. We fix, once and for all, one of the assignments
\begin{equation}\label{eq:two-calkin-assignments}
    \Jop=\A
    \qquad\text{or}\qquad
    \Jop=\K,
\end{equation}
and put
\begin{equation*}
    \Q(X)=\B(X)/\Jop(X).
\end{equation*}
The choice in \eqref{eq:two-calkin-assignments} is used consistently for all Banach spaces. Every statement below involving $\Q$ is valid for either fixed choice, while our main theorem asserts that the same algebra fails to arise for both choices.

Two operators represent the same element of $\Q(X)$ precisely when their difference belongs to $\Jop(X)$, so the quotient captures the structure of the operators on $X$ up to the selected class of perturbations. Understanding this algebra can reveal structural features of the operators that a Banach space admits. Conversely, determining which Banach algebras can arise in this way helps to delineate the possible forms of operator algebras modulo small operators. For Hilbert spaces the two ideals coincide, and the compact-operator formulation goes back to Calkin's study of the ideals and congruences of $\B(\ell_2)$ \cite{Calkin1941}.

This leads naturally to the following realisation problem: given a unital Banach algebra $A$, does there exist a Banach space $X$ such that
\begin{equation*}
    A\cong\Q(X)
\end{equation*}
as Banach algebras? The problem was recorded in Tarbard's thesis and later discussed by Horv\'ath and Kania; see \cite[p.~134]{Tarbard2013} and \cite{HorvathKania2021}.

The scalar-plus-compact theorem of Argyros and Haydon provided the first striking example. They constructed an infinite-dimensional Banach space $X$ on which every operator is a scalar multiple of the identity plus a compact operator \cite{ArgyrosHaydon2011}, consequently obtaining the scalar field as a Calkin algebra. Their space has a Schauder basis, so the two quotient conventions agree. Later constructions revealed considerable flexibility. Motakis, Puglisi and Zisimopoulou realised $C(K)$ for every countable compact metric space $K$, and Motakis extended this to every compact metric space \cite{MotakisPuglisiZisimopoulou2016,Motakis2024}. Other examples include broad classes of diagonal scalar-plus-compact algebras \cite{MotakisPuglisiTolias2020}, infinite-dimensional reflexive Calkin algebras \cite{MotakisPelczarBarwacz2025}, and the unitisation of the noncommutative algebra $\K(c_0)$ \cite{MotakisPuglisi2025}. The realizing spaces in these results have the approximation property, so they also give examples under either convention.

Despite this range of positive results, it remained unknown whether every unital Banach algebra is isomorphic to the Calkin algebra of some Banach space. Horv\'ath and Kania obtained a partial negative result: their argument yields a simple unital AF $C^*$-algebra $D$ of density character $\mathfrak c$ which is not isomorphic to $\B(Y)/\K(Y)$ for any separable Banach space $Y$ \cite[Theorem~1.1(iii) and its proof]{HorvathKania2021}. Simplicity shows that this same $D$ is not isomorphic to $\B(Y)/\A(Y)$ for any separable $Y$ either; we record the argument in \Cref{thm:HK}. Their result did not rule out a representation over a nonseparable Banach space. The present paper removes this restriction, simultaneously for both quotient conventions.

\begin{theorem}\label{thm:main}
There is a unital Banach algebra $A$ of density character $\mathfrak c$, having exactly one non-zero proper closed two-sided ideal, such that, for every Banach space $X$,
\begin{equation*}
    A\not\cong\B(X)/\A(X)
    \qquad\text{and}\qquad
    A\not\cong\B(X)/\K(X)
\end{equation*}
as Banach algebras.
\end{theorem}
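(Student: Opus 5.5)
Our approach is to build $A$ from the Horv\'ath--Kania algebra $D$, and to use separable-range ideals to reduce the nonseparable case to the separable one, where \Cref{thm:HK} applies.

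\emph{Step 1: the candidate.} Let $D$ be the simple unital AF $C^*$-algebra of density character $\mathfrak c$ from \Cref{thm:HK}. We take $A$ to be a unital algebra with a unique non-zero proper closed ideal $I$ whose quotient $A/I$ is isomorphic to $D$. A natural choice is a unitisation-type extension $A=I\oplus D$, where $I$ is a simple non-unital ideal that is essential and small. One option is $I=\K(\ell_2)$ acted on by a separable unital copy. We will design $A$ so that the following two properties hold. First, every non-zero closed ideal of $A$ contains $I$; simplicity of $D$ then gives exactly one non-trivial ideal. Second, $I$ is separable, or at least has small density, while $A/I\cong D$ has density $\mathfrak c$.

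\emph{Step 2: the ideal structure of a Calkin algebra.} Suppose $\theta\colon A\to\Q(X)$ is an isomorphism. Consider the separable-range ideal
\[
\Ssep(X)=\{T\in\B(X): T(X)\text{ is separable}\}.
\]
Its image $\Ssep(X)/\Jop(X)$ is a closed ideal in $\Q(X)$, since $\Jop(X)\subseteq\Ssep(X)$ and $\Ssep(X)$ is closed. If $X$ is nonseparable, this ideal is proper. It is also non-zero unless every separable-range operator lies in $\Jop(X)$. We will show that the latter fails whenever $X$ has a complemented infinite-dimensional separable subspace, which holds for instance if $X$ contains a complemented copy of a separable space. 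More carefully, we plan to use that every separable subspace of $X$ embeds in a separable subspace $Y$ such that there are operators $X\to Y\to X$ whose composite is a non-compact projection-like operator. That fact is not automatic, and here we would use the large density of $D$, which contains many orthogonal idempotents lifted from $\Q(X)$.

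\emph{Step 3: conclude in cases.} If $\Ssep(X)/\Jop(X)=\{0\}$ or equals $\Q(X)$, we reach a contradiction. In the first case $\Q(X)$ would be small. In the second case the identity is separable-range modulo $\Jop$, so $X$ is essentially separable and \Cref{thm:HK} applies directly, after noting that a separable-range Fredholm-type identity forces $X$ to be separable. Otherwise $\theta^{-1}(\Ssep(X)/\Jop(X))=I$, so
\[
D\cong A/I\cong\B(X)/\Ssep(X).
\]
We then need to show that $D$ is not of this form. For this we extract, using AF-ness, an increasing sequence of finite-dimensional subalgebras. Lifting idempotents and matrix units to $\B(X)$, we build a separable complemented subspace $Y$ with $\B(Y)/\Jop(Y)$ containing a copy of a separable non-realisable piece. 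This yields a contradiction with the Horv\'ath--Kania density/cardinality argument: $\Q(Y)$ for separable $Y$ has at most $\mathfrak c$ elements but cannot contain the required $2^{\mathfrak c}$-type structure.

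\emph{Main obstacle.} The hardest step is Step 3, transferring the separable obstruction of \Cref{thm:HK} to the quotient $\B(X)/\Ssep(X)$ for nonseparable $X$. Localising lifted idempotents onto a separable complemented subspace while keeping control modulo $\Jop$ is delicate. We would first try to use the fact that idempotents in $\Q(X)$ lift to idempotents in $\B(X)$ up to $\Jop$, and then take the closed span of the ranges of countably many lifts.
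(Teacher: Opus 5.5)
Your proposal has a genuine gap, and it comes from where you put $D$ inside $A$. The algebra $D$ of \Cref{thm:HK} is produced by a counting argument: there are $2^{\mathfrak c}$ pairwise non-isomorphic simple AF algebras of density $\mathfrak c$, but only $\mathfrak c$ separable Banach spaces up to isomorphism. So the \emph{only} usable property of $D$ is that $D\not\cong\Q(Y)$ for separable $Y$. $D$ itself has cardinality $\mathfrak c$ and carries no intrinsic ``$2^{\mathfrak c}$-type structure''. Your sketch in Step~3 would at best produce a separable piece inside some $\Q(Y)$, which contradicts nothing.

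To use the one available property you must realise $D$ itself as the Calkin algebra of a separable space. The mechanism for this is the corner lemma: for an idempotent $e\in\Q(X)$ one has $e\Q(X)e\cong\Q(Y)$ with $Y$ complemented, and $Y$ is separable if $X$ is or if $e\in\Ssep(X)/\Jop(X)$ (\Cref{lem:calkin-corners,lem:separable-range-corner}). Making $D$ a quotient $A/I$ breaks this:
\begin{itemize}
\item In the separable case you only get $D\cong\Q(X)/\theta(I)$, a quotient of a Calkin algebra. So \Cref{thm:HK} does not ``apply directly''.
\item In the nonseparable case with $\theta^{-1}(\Ssep(X)/\Jop(X))=I$ you get $D\cong\B(X)/\Ssep(X)$, about which \Cref{thm:HK} says nothing.
\end{itemize}
The paper instead takes $A=\C I_{E_D}+J_D$ on $E_D=\ell_1(\N,D)$. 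Then $A/J_D\cong\C$, while $D\cong\varepsilon_{11}A\varepsilon_{11}$ is a \emph{corner} cut by an idempotent lying in the unique non-trivial ideal $J_D$ (\Cref{prop:ideal-structure}). Hence both ``$X$ separable'' and ``$\Phi(J_D)\subseteq\Ssep(X)/\Jop(X)$'' yield $D\cong\Q(Y)$ with $Y$ separable. Your Step~1 also never actually specifies $A$. With $I=\K(\ell_2)$ essential, $A/I\cong D$ would have to embed in $\B(\ell_2)/\K(\ell_2)$, and you do not address this.

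The second gap is the case $\Ssep(X)=\Jop(X)$ with $X$ nonseparable. Your assertion that $\Q(X)$ ``would be small'' is unjustified. Step~2 itself concedes that the needed non-compact separable-range operator is ``not automatic'', and the large density of $D$ does not produce one. This is exactly where the paper needs \Cref{prop:matrix-obstruction}:
\begin{itemize}
\item Matrix units in $\Q(X)$ with a $c_0$-type first-row estimate and a bounded first column are lifted to $\B(X)$.
\item The relations are made exact modulo a separable invariant subspace $N$.
\item This yields $\Theta\colon X\to\ell_1$ and $T\colon\ell_1\to X$ with $\Theta T=I_{\ell_1}$, i.e.\ a non-compact projection with separable range.
\end{itemize}
Constructing $T$ from bounded lifts uses the lifting property of $\ell_1$, and the needed estimates come precisely from the $\ell_1$-norm on $E_D$ (\Cref{lem:AD-matrix-units}). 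The matrix units of $\K(\ell_2)$ in your candidate violate the row hypothesis, since $\norm{\sum_j a_je_{1j}}=(\sum_j|a_j|^2)^{1/2}$. You offer no substitute, so this case remains open in your approach.
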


\subsection{Proof strategy and organisation}

Let $D$ be the simple unital AF $C^*$-algebra described above. On the Banach space
\begin{equation*}
    E_D=\ell_1(\N,D)
\end{equation*}
we consider the norm-closed algebra $J_D$ generated by the finite-support $D$-valued matrices, and set
\begin{equation*}
    A_D=\C I_{E_D}+J_D.
\end{equation*}
If $p$ is the first diagonal matrix unit, then
\begin{equation*}
    pA_Dp\cong D.
\end{equation*}
Moreover, the simplicity of $D$ implies that $J_D$ is topologically simple, while $J_D$ is nonunital. It follows that the closed two-sided ideals of $A_D$ are precisely
\begin{equation*}
    \{0\}\subsetneq J_D\subsetneq A_D.
\end{equation*}

Fix either choice of $\Jop$ in \eqref{eq:two-calkin-assignments}, and suppose that
\begin{equation*}
    \Phi\colon A_D\longrightarrow\Q(X)
\end{equation*}
is a Banach-algebra isomorphism. If $X$ is separable, the Calkin-corner lemma, \Cref{lem:calkin-corners}, identifies the corner determined by $\Phi(p)$ with $\Q(Y)$ for a separable complemented subspace $Y$ of $X$. Hence
\begin{equation*}
    D\cong pA_Dp\cong\Q(Y),
\end{equation*}
contrary to the choice of $D$.

Now suppose that $X$ is nonseparable. The operators with separable range determine a proper closed two-sided ideal
\begin{equation*}
    \mathcal R_X=\Ssep(X)/\Jop(X)
\end{equation*}
of $\Q(X)$. Since $A_D$ has only one non-zero proper closed two-sided ideal,
\begin{equation*}
    \Phi^{-1}(\mathcal R_X)=0
    \qquad\text{or}\qquad
    \Phi^{-1}(\mathcal R_X)=J_D.
\end{equation*}
In the second case, $\Phi(p)\in\mathcal R_X$, and the separable-range corner lemma, \Cref{lem:separable-range-corner}, gives a separable complemented subspace $Y$ such that
\begin{equation*}
    D\cong pA_Dp\cong\Q(Y),
\end{equation*}
which is impossible.

In the first case, the surjectivity of $\Phi$ gives $\mathcal R_X=0$, and therefore
\begin{equation*}
    \Ssep(X)=\Jop(X).
\end{equation*}
The remaining contradiction comes from the canonical matrix units in $A_D$. For each finite non-empty $F\subseteq\N$ and each scalar family $(a_j)_{j \in F}$, we have
\begin{equation*}
    \left\|\sum_{j \in F}a_j\varepsilon_{1j}\right\|
    \leq\sup_{j \in F}|a_j|,
    \qquad
    \sup_{i \in \N}\norm{\varepsilon_{i1}}\leq1.
\end{equation*}
Their images under $\Phi$ satisfy the same estimates up to a uniform constant. The matrix-unit obstruction proved in \Cref{sec:matrix-obstruction} produces a noncompact operator with separable range. Such an operator belongs to $\Ssep(X)$ but not to $\Jop(X)$, since $\Jop(X)\subseteq\K(X)$, contradicting the preceding equality.

In \Cref{sec:preliminaries} we fix notation, identify the coefficient algebra, and prove the lifting theorem for idempotents under both quotient conventions. In \Cref{sec:calkin-corners} we prove the corner lemmas and study the separable-range ideal. In \Cref{sec:matrix-obstruction} we establish the matrix-unit obstruction. Finally, in \Cref{sec:one-ideal-algebra} we construct $A_D$, determine its ideal lattice, and prove \Cref{thm:main}.

\begin{remark}[Real scalars]
We work throughout over the complex field, but \Cref{thm:main} has a real analogue. Let $A_{\mathbb R}$ denote the underlying real Banach algebra of the complex algebra $A$ constructed above. Every closed real two-sided ideal $I$ of $A_{\mathbb R}$ is automatically complex-linear: if $x\in I$, then
\begin{equation*}
    ix=(i1_A)x\in I.
\end{equation*}
Thus the closed real two-sided ideals of $A_{\mathbb R}$ are precisely the closed complex two-sided ideals of $A$. Moreover, $A_{\mathbb R}$ has the same underlying normed space as $A$, and hence the same density character.

Let $\Jop_{\mathbb R}$ denote either the real approximable-operator assignment or the real compact-operator assignment. Suppose, towards a contradiction, that there are a real Banach space $X$ and a real Banach-algebra isomorphism
\begin{equation*}
    A_{\mathbb R}\cong\Q_{\mathbb R}(X),
    \qquad
    \Q_{\mathbb R}(X)=\B_{\mathbb R}(X)/\Jop_{\mathbb R}(X).
\end{equation*}
Complexifying this isomorphism gives
\begin{equation*}
    (A_{\mathbb R})_{\mathbb C}\cong\bigl(\Q_{\mathbb R}(X)\bigr)_{\mathbb C}.
\end{equation*}
Standard complexification yields
\begin{equation*}
    (A_{\mathbb R})_{\mathbb C}\cong A\oplus\overline A,
\end{equation*}
where $\overline A$ denotes the conjugate complex Banach algebra. On the other hand, complexification commutes with quotients, and therefore
\begin{equation*}
    \bigl(\Q_{\mathbb R}(X)\bigr)_{\mathbb C}
    \cong
    \bigl(\B_{\mathbb R}(X)\bigr)_{\mathbb C}
    \big/
    \bigl(\Jop_{\mathbb R}(X)\bigr)_{\mathbb C} \cong
    \B(X_{\mathbb C})/\Jop(X_{\mathbb C})
    =\Q(X_{\mathbb C}),
\end{equation*}
where $\Jop$ is the corresponding complex operator ideal. Here
\begin{equation*}
    \bigl(\B_{\mathbb R}(X)\bigr)_{\mathbb C}\cong\B(X_{\mathbb C}),
\end{equation*}
and
\begin{equation*}
    \bigl(\Jop_{\mathbb R}(X)\bigr)_{\mathbb C}\cong\Jop(X_{\mathbb C}).
\end{equation*}
For $\Jop=\A$, the latter identification follows by complexifying finite-rank approximations, while for $\Jop=\K$ it is the standard compact-operator identity. Consequently, there is a complex Banach-algebra isomorphism
\begin{equation*}
    \Phi\colon A\oplus\overline A\longrightarrow\Q(X_{\mathbb C}).
\end{equation*}
Put
\begin{equation*}
    e=\Phi(1_A,0).
\end{equation*}
Then $e$ is an idempotent and
\begin{equation*}
    e\Q(X_{\mathbb C})e
    =\Phi\bigl((1_A,0)(A\oplus\overline A)(1_A,0)\bigr)
    =\Phi(A\oplus0)
    \cong A.
\end{equation*}
By \Cref{lem:calkin-corners}, there is a complemented closed complex subspace $Z\subseteq X_{\mathbb C}$ such that
\begin{equation*}
    e\Q(X_{\mathbb C})e\cong\Q(Z).
\end{equation*}
It follows that
\begin{equation*}
    A\cong\Q(Z),
\end{equation*}
contrary to \Cref{thm:main}. Since the choice of $\Jop_{\mathbb R}$ was arbitrary, $A_{\mathbb R}$ is not isomorphic to either real Calkin quotient of any real Banach space.
\end{remark}
\bigskip
\section{Notation and preliminary results}\label{sec:preliminaries}

We use standard notation and conventions, unless explicitly stated other\-wise. We write $\N=\{1,2,\ldots\}$, and denote the closed unit ball of a Banach space $X$ by $B_X$. By an \emph{operator}, we mean a bounded linear map. For Banach spaces $X$ and $Y$, write $\B(X,Y)$ for the space of operators from $X$ to $Y$, and write $\B(X)=\B(X,X)$. The identity operator on $X$ is denoted by $I_X$, or simply by $I$ when the underlying space is clear. If $A$ is a unital Banach algebra, its identity is denoted by $1_A$.

We write $\mathscr F(X,Y)$ for the finite-rank operators from $X$ to $Y$ and
\begin{equation*}
    \A(X,Y)=\overline{\mathscr F(X,Y)}
\end{equation*}
for the approximable operators. We also write $\K(X,Y)$ for the compact operators from $X$ to $Y$. As usual, the second variable is omitted when $X=Y$. Throughout the paper, $\Jop$ denotes either $\A$ or $\K$, with one choice fixed consistently for all pairs of Banach spaces, and
\begin{equation*}
    \Q(X)=\B(X)/\Jop(X).
\end{equation*}
The corresponding quotient map will usually be denoted by
\begin{equation*}
    q\colon\B(X)\longrightarrow\Q(X).
\end{equation*}
In either case,
\begin{equation}\label{eq:J-contained-in-K}
    \Jop(X,Y)\subseteq\K(X,Y).
\end{equation}
An isomorphism of Banach algebras means a bounded complex-linear algebra isomorphism with bounded inverse.

We shall also use the following terminology.

\begin{definition}\label{def:density-simple}
Let $M$ be a topological space and let $A$ be a Banach algebra.
\begin{enumerate}[label=(\alph*)]
    \item The \emph{density character} of $M$, denoted by $\dens(M)$, is the least cardinality of a dense subset of $M$.
    \item The algebra $A$ is \emph{topologically simple} if its only closed two-sided ideals are $0$ and $A$.
\end{enumerate}
\end{definition}

The coefficient algebra in our construction must be topologically simple and must not be a Calkin algebra of a separable Banach space under either convention. The following consequence of Horv\'ath and Kania gives one algebra that works simultaneously for both choices.

\begin{theorem}[Horv\'ath--Kania]\label{thm:HK}
There is a simple unital AF $C^*$-algebra $D$ of density character $\mathfrak c$ such that, for every separable Banach space $Y$,
\begin{equation*}
    D\not\cong\B(Y)/\A(Y)
    \qquad\text{and}\qquad
    D\not\cong\B(Y)/\K(Y)
\end{equation*}
as Banach algebras.
\end{theorem}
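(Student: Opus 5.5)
The plan is to take the algebra $D$ produced by Horv\'ath and Kania \cite[Theorem~1.1(iii) and its proof]{HorvathKania2021}: a simple unital AF $C^*$-algebra of density character $\mathfrak c$. Their result already gives $D\not\cong\B(Y)/\K(Y)$ for every separable Banach space $Y$. What remains is to rule out $D\cong\B(Y)/\A(Y)$ for separable $Y$. I would do this by reducing to the compact-operator case, using simplicity together with the fact that $D$ is large.

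Suppose $\Psi\colon D\to\B(Y)/\A(Y)$ is a Banach-algebra isomorphism with $Y$ separable. Since $\A(Y)\subseteq\K(Y)$, the set $\K(Y)/\A(Y)$ is a closed two-sided ideal of $\B(Y)/\A(Y)$. It is proper, because $I_Y\notin\K(Y)$: indeed $Y$ must be infinite-dimensional, as otherwise the quotient is $0\neq D$. Pulling this ideal back along $\Psi$ gives a proper closed ideal of the simple algebra $D$, so it is zero. Hence $\K(Y)=\A(Y)$ and
\begin{equation*}
    \B(Y)/\K(Y)=\B(Y)/\A(Y)\cong D,
\end{equation*}
which contradicts the compact-operator statement. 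The key observation is that simplicity collapses the distinction between the two conventions. No approximation-property information about $Y$ is needed.

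The main obstacle is making sure that the Horv\'ath--Kania conclusion really is as stated: a simple unital AF algebra of density $\mathfrak c$ that avoids $\B(Y)/\K(Y)$ for \emph{all} separable $Y$. Their argument is a counting one. For separable $Y$ there are at most $\mathfrak c$ algebras of the form $\B(Y)/\K(Y)$ up to isomorphism, since $\B(Y)$ has cardinality at most $\mathfrak c$ and there are at most $\mathfrak c$ separable spaces. On the other hand there are $2^{\mathfrak c}$ pairwise non-isomorphic simple unital AF algebras of density $\mathfrak c$, which one obtains via Bratteli diagrams or Elliott-type invariants; as Banach algebras they are also pairwise non-isomorphic, by the $C^*$-rigidity results they invoke. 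This counting step is where care is needed. I would cite it from their proof rather than reprove it, and check only that their isomorphism notion matches the Banach-algebra isomorphisms used here.
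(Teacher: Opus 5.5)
Your proposal is correct and follows the paper's proof essentially step for step. You cite Horv\'ath--Kania with $\lambda=\aleph_0$ (so that $\kappa=2^{\aleph_0}=\mathfrak c$) for the compact-operator case, and then use simplicity of $D$ to pull the proper closed ideal $\K(Y)/\A(Y)$, the kernel of the canonical surjection $\B(Y)/\A(Y)\to\B(Y)/\K(Y)$, back to zero, which forces $\A(Y)=\K(Y)$; deferring the counting step to their proof is exactly what the paper does.
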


\begin{proof}
Apply \cite[Theorem~1.1(iii)]{HorvathKania2021} to the compact-operator assignment with $\lambda=\aleph_0$. In the notation used in the proof of that theorem, $\kappa=2^\lambda$, and the AF $C^*$-algebra in part~\textup{(iii)} is chosen to have density character $\kappa$. Hence its density character is
\begin{equation*}
    2^{\aleph_0}=\mathfrak c,
\end{equation*}
using the family of pairwise non-isomorphic simple AF algebras constructed by Farah and Katsura \cite{FarahKatsura2015}. The theorem ensures that this algebra $D$ is not isomorphic to $\B(Y)/\K(Y)$ for any non-zero separable Banach space $Y$. The conclusion is also clear for the zero space.

It remains to check that the same $D$ works for approximable operators. Suppose that
\begin{equation*}
    D\cong\B(Y)/\A(Y)
\end{equation*}
for some separable Banach space $Y$. The quotient is zero when $Y$ is finite-dimensional, so $Y$ must be infinite-dimensional. The canonical surjection
\begin{equation*}
    \B(Y)/\A(Y)\longrightarrow\B(Y)/\K(Y)
\end{equation*}
has kernel $\K(Y)/\A(Y)$. This kernel is a proper closed two-sided ideal because the codomain is non-zero. Its inverse image in $D$ under the supposed isomorphism must therefore be zero, since every closed two-sided ideal in the simple $C^*$-algebra $D$ is either zero or all of $D$; see, for example, \cite[Theorem~3.1.3]{Murphy1990}. Hence
\begin{equation*}
    \A(Y)=\K(Y),
\end{equation*}
which would give $D\cong\B(Y)/\K(Y)$, a contradiction.
\end{proof}

\begin{remark}\label{rem:Cstar-simple}
The algebra $D$ in \Cref{thm:HK} is topologically simple when regarded merely as a Banach algebra. Indeed, every closed two-sided ideal in a $C^*$-algebra is self-adjoint by the result cited in the preceding proof. Thus simplicity as a $C^*$-algebra rules out non-zero proper closed two-sided ideals even after the involution is forgotten.
\end{remark}

To identify a corner of a Calkin algebra, we need to lift its defining idempotent to an idempotent operator without leaving the selected operator ideal. The following elementary spectral argument gives the required lifting result under both quotient conventions.

\begin{lemma}[Lifting idempotents from a Calkin algebra]\label{lem:essential-idempotent-lift}
Let $X$ be a Banach space and let $T\in\B(X)$. If
\begin{equation*}
    T^2-T\in\Jop(X),
\end{equation*}
then there is an idempotent $P\in\B(X)$ such that
\begin{equation*}
    P-T\in\Jop(X).
\end{equation*}
\end{lemma}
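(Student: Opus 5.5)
Put $K=T^2-T\in\Jop(X)$. The plan is to use the holomorphic functional calculus in $\B(X)$, together with the Riesz theory of compact operators, to show that the spectrum of $T$ can only accumulate at $0$ and $1$ and is otherwise made of isolated eigenvalues of finite algebraic multiplicity. Choose a Cauchy contour separating $0$ from $1$ and set $P$ to be the Riesz projection of $T$ associated with the part of $\sigma(T)$ lying inside the contour around $1$. This $P$ is an idempotent in $\B(X)$, and it remains to show $P-T\in\Jop(X)$.

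\emph{Spectral step.} Consider $f(z)=z^2-z$. By the spectral mapping theorem, $f(\sigma(T))=\sigma(K)$. Since $K$ is compact by \eqref{eq:J-contained-in-K}, $\sigma(K)$ is countable with $0$ as its only possible accumulation point. Hence $\sigma(T)$ is countable and accumulates only on $f^{-1}(0)=\{0,1\}$. In particular, we can pick a circle $\Gamma$ of radius $1/2$ about $1$ that avoids $\sigma(T)$, since countably many points miss some radius near $1/2$. Let
\[
P=\frac1{2\pi i}\oint_\Gamma (z-T)^{-1}\,dz .
\]
This is an idempotent commuting with $T$.

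\emph{Ideal step.} We show $P-T\in\Jop(X)$ by working in the quotient. Let $q$ be the quotient map, and note that $\Q(X)$ is a Banach algebra because $\Jop(X)$ is a closed ideal. Then $t=q(T)$ satisfies $t^2=t$, so $\sigma_{\Q}(t)\subseteq\{0,1\}$. For $z\in\Gamma$, the inverse is explicitly
\[
(z-t)^{-1}=\frac{1}{z}(1-t)+\frac1{z-1}\,t .
\]
Since $q$ is a continuous unital homomorphism, it commutes with the contour integral and maps $(z-T)^{-1}$ to $(z-t)^{-1}$. Therefore
\[
q(P)=\frac1{2\pi i}\oint_\Gamma (z-t)^{-1}\,dz=t .
\]
For this we use $\oint_\Gamma dz/z=0$ and $\oint_\Gamma dz/(z-1)=2\pi i$. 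Hence $P-T\in\ker q=\Jop(X)$.

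\emph{Main obstacle.} The point needing care is that $\Gamma$ must avoid $\sigma(T)$ in $\B(X)$, not merely in $\Q(X)$. This is exactly where compactness of $K$, via $\Jop\subseteq\K$, is used to make $\sigma(T)$ countable. In fact even this is more than needed: any circle about $1$ of radius $r\in(0,1)$ missing $\sigma(T)$ suffices, and such $r$ exists because $\sigma(T)$ is countable. Once the contour is in place, the identification $q(P)=t$ is purely algebraic and works uniformly for both choices $\Jop=\A$ and $\Jop=\K$, since only closedness of the ideal is used.
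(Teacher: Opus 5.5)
Your proposal is correct and follows essentially the same argument as the paper: the spectral mapping theorem applied to the compact operator $T^2-T$ shows that $\sigma(T)$ is countable, you take the Riesz projection along a circle avoiding $\sigma(T)$, and the explicit resolvent of an idempotent in $\Q(X)$ gives $q(P)=q(T)$. The differences are cosmetic. The paper integrates around $0$ and sets $P=I_X-E$ after a trivial-case split it does not actually need, whereas you integrate around $1$ directly. Also, the radius must be some $r\in(0,1)$ whose circle misses $\sigma(T)$, not exactly $1/2$, as you note yourself.
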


\begin{proof}
    If $T\in\Jop(X)$, take $P=0$. If $I_X-T\in\Jop(X)$, take $P=I_X$. We may therefore suppose that
    \begin{equation*}
        e=q(T)
    \end{equation*}
    is a non-zero idempotent distinct from the identity of $\Q(X)$.
    
    Put
    \begin{equation*}
        K=T^2-T\in\Jop(X).
    \end{equation*}
    Since $\Jop(X)\subseteq\K(X)$, the operator $K$ is compact. Hence $\sigma_{\B(X)}(K)$ is countable, with zero as its only possible accumulation point. By the polynomial spectral mapping theorem,
    \begin{equation*}
        \sigma_{\B(X)}(K)=\{\lambda(\lambda-1):\lambda\in\sigma_{\B(X)}(T)\}.
    \end{equation*}
    The polynomial $z\mapsto z(z-1)$ has finite fibres, so it follows that $\sigma_{\B(X)}(T)$ is countable.
    
    Since $e$ is a non-zero idempotent distinct from the identity,
    \begin{equation*}
        \sigma_{\Q(X)}(e)=\{0,1\}.
    \end{equation*}
    Spectral inclusion under the quotient map therefore gives
    \begin{equation*}
        \{0,1\}\subseteq\sigma_{\B(X)}(T).
    \end{equation*}
    
    Since $\sigma_{\B(X)}(T)$ is countable, we may choose $r\in(0,1)$ such that
    \begin{equation*}
        \sigma_{\B(X)}(T)\cap\{z\in\C:|z|=r\}=\varnothing.
    \end{equation*}
    Let $\gamma_r$ denote the positively oriented circle $\{z\in\C:|z|=r\}$, and define
    \begin{equation*}
        E=\frac{1}{2\pi i}\int_{\gamma_r}(zI_X-T)^{-1}\,dz.
    \end{equation*}
    By the Riesz functional calculus, $E$ is an idempotent. Consequently,
    \begin{equation*}
        P=I_X-E
    \end{equation*}
    is also an idempotent.
    
    It remains to show that $P-T\in\Jop(X)$. For every $z\in\gamma_r$,
    \begin{equation*}
        q\bigl((zI_X-T)^{-1}\bigr)=(z1_{\Q(X)}-e)^{-1}.
    \end{equation*}
    Since $q$ is continuous and linear, it may be applied under the contour integral, giving
    \begin{equation*}
        q(E)=\frac{1}{2\pi i}\int_{\gamma_r}(z1_{\Q(X)}-e)^{-1}\,dz.
    \end{equation*}
    For $z\notin\{0,1\}$, the identity $e^2=e$ gives
    \begin{equation*}
        (z1_{\Q(X)}-e)^{-1}=\frac{1}{z}(1_{\Q(X)}-e)+\frac{1}{z-1}e.
    \end{equation*}
    The circle $\gamma_r$ surrounds zero but not one. Hence
    \begin{equation*}
        q(E)=1_{\Q(X)}-e.
    \end{equation*}
    It follows that
    \begin{equation*}
        q(P)=1_{\Q(X)}-q(E)=e=q(T).
    \end{equation*}
    Therefore
    \begin{equation*}
        P-T\in\ker q=\Jop(X),
    \end{equation*}
    as required.
\end{proof}
\bigskip
\section{Calkin corners and separable-range ideals}\label{sec:calkin-corners}

We begin with the corner lemma used in the proof of \Cref{thm:main}. If $P\in\B(X)$ is an idempotent, then $PX$ is a complemented closed subspace of $X$, and the operator corner $P\B(X)P$ is naturally isomorphic to $\B(PX)$. The same identification persists after passage to either Calkin quotient.

\begin{lemma}[Corners of Calkin algebras]\label{lem:calkin-corners}
Let $X$ be a Banach space and let $e\in\Q(X)$ be an idempotent. Then there is a complemented closed subspace $Y\subseteq X$ such that
\begin{equation*}
    e\Q(X)e\cong\Q(Y).
\end{equation*}
If $X$ is separable, then $Y$ is separable.
\end{lemma}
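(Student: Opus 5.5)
We lift $e$ to a genuine idempotent operator and then identify the operator corner with $\B(Y)$, checking that the ideal $\Jop$ is respected. Write $e=q(T)$ with $T\in\B(X)$. Since $e^2=e$, we have $T^2-T\in\Jop(X)$, so \Cref{lem:essential-idempotent-lift} provides an idempotent $P\in\B(X)$ with $q(P)=e$. Put $Y=PX$. This is a closed subspace complemented by $\ker P$, and it is separable whenever $X$ is.

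Let $\iota\colon Y\to X$ be the inclusion and $\pi\colon X\to Y$ the corestriction of $P$, so that $\pi\iota=I_Y$ and $\iota\pi=P$. The map
\begin{equation*}
    \theta\colon \B(Y)\to P\B(X)P,\qquad S\mapsto \iota S\pi,
\end{equation*}
is a bounded unital algebra isomorphism onto the corner, with inverse $R\mapsto \pi R\iota$; both maps are bounded by $\norm{\iota}\,\norm{\pi}$-type constants. Define $\Psi\colon\Q(Y)\to e\Q(X)e$ by $\Psi(q_Y(S))=q(\iota S\pi)$.

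The key step is the identity $\theta(\Jop(Y))=\Jop(X)\cap P\B(X)P$. This holds because both $\A$ and $\K$ are operator ideals: composing with the bounded maps $\iota,\pi$ preserves finite rank, limits of finite-rank operators and compactness. Hence $S\in\Jop(Y)$ implies $\iota S\pi\in\Jop(X)$. Conversely, if $\iota S\pi\in\Jop(X)$, then $S=\pi(\iota S\pi)\iota\in\Jop(Y)$. It follows that $\Psi$ is well defined and injective. It is surjective because every element of $e\Q(X)e$ has the form $q(P)q(R)q(P)=q(PRP)$ and $PRP=\theta(\pi R\iota)$. It is a unital algebra homomorphism onto the unital algebra $e\Q(X)e$, whose unit is $e$, and it is bounded, since $\norm{q(\iota S\pi)}\le\norm{\iota}\norm{\pi}\,\norm{\iota (S+K)\pi}$-estimates pass to infima over $K\in\Jop(Y)$.

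It remains to bound the inverse. One route is the open mapping theorem, applied after checking that $e\Q(X)e$ is closed in $\Q(X)$; it is, being the range of the bounded idempotent map $a\mapsto eae$. Alternatively, one can estimate directly: given $R\in P\B(X)P$ and $L\in\Jop(X)$, we have $\norm{q_Y(\pi R\iota)}\le\norm{\pi(R+L)\iota}\le\norm{\pi}\norm{\iota}\norm{R+L}$. I expect the only point needing care to be this ideal compatibility, in particular for $\A$, where one must use $\A(X,Y)$-style composition rather than assume the approximation property. That compatibility is immediate from the composition argument, so no serious obstacle is expected.
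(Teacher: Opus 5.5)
Your proposal is correct and follows essentially the same route as the paper. Both lift $e$ to an idempotent $P$ via \Cref{lem:essential-idempotent-lift}, identify $P\B(X)P$ with $\B(Y)$ for $Y=PX$, match $P\B(X)P\cap\Jop(X)$ with $\Jop(Y)$ by composing finite-rank approximants or compact operators with the inclusion and $P$, and then pass to quotients. The only difference is cosmetic: the paper obtains the quotient isomorphism from the kernel of $q$ restricted to the corner, using closedness of $e\Q(X)e$. You instead write $\Psi$ down explicitly and bound both $\Psi$ and $\Psi^{-1}$ directly, which makes an appeal to the open mapping theorem unnecessary.
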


\begin{proof}
Choose $T\in\B(X)$ with $q(T)=e$. Since $e$ is an idempotent,
\begin{equation*}
    T^2-T\in\Jop(X).
\end{equation*}
By \Cref{lem:essential-idempotent-lift}, there is an idempotent $P\in\B(X)$ such that $q(P)=e$. Put
\begin{equation*}
    Y=PX.
\end{equation*}
Then $Y$ is a complemented closed subspace of $X$. The corner $P\B(X)P$ is closed, since it is the range of the bounded projection
\begin{equation*}
    S\longmapsto PSP
\end{equation*}
on $\B(X)$.

Let $\iota\colon Y\longrightarrow X$ denote the inclusion map. The restriction map
\begin{equation*}
    \rho\colon P\B(X)P\longrightarrow\B(Y),
    \qquad
    \rho(S)=S|_Y,
\end{equation*}
is a Banach-algebra isomorphism. Its inverse sends $R\in\B(Y)$ to $\iota RP\in P\B(X)P$. We claim that
\begin{equation}\label{eq:corner-selected-ideal}
    \rho\bigl(P\B(X)P\cap\Jop(X)\bigr)=\Jop(Y).
\end{equation}

For $\Jop=\K$, this follows because compactness is preserved under restriction and composition with bounded operators. Explicitly, if $S\in P\B(X)P$ and $S|_Y$ is compact, then
\begin{equation*}
    S=\iota(S|_Y)P
\end{equation*}
is compact on $X$, and the converse follows by restriction.

For $\Jop=\A$, let $S\in P\B(X)P\cap\A(X)$ and choose finite-rank operators $F_n\in\mathscr F(X)$ such that $F_n\to S$. Then
\begin{equation*}
    PF_nP\longrightarrow PSP=S,
\end{equation*}
and each $(PF_nP)|_Y$ is a finite-rank operator on $Y$. Hence $S|_Y\in\A(Y)$. Conversely, if $R\in\A(Y)$ and $R_n\in\mathscr F(Y)$ converges to $R$, then
\begin{equation*}
    \iota R_nP\longrightarrow\iota RP
\end{equation*}
in $\B(X)$, and every $\iota R_nP$ has finite rank. Thus $\iota RP\in\A(X)$, proving \eqref{eq:corner-selected-ideal} in the second case as well.

The restriction of $q$ to $P\B(X)P$ maps onto $e\Q(X)e$. To see this, let $a\in e\Q(X)e$ and choose $S\in\B(X)$ with $q(S)=a$. Then
\begin{equation*}
    a=eq(S)e=q(PSP).
\end{equation*}
The kernel of the restricted map is $P\B(X)P\cap\Jop(X)$. Hence, by \eqref{eq:corner-selected-ideal},
\begin{equation*}
\begin{aligned}
    e\Q(X)e
    &\cong P\B(X)P/(P\B(X)P\cap\Jop(X))\\
    &\cong \B(Y)/\Jop(Y)
    =\Q(Y).
\end{aligned}
\end{equation*}
Here $e\Q(X)e$ is closed, since it is the range of the bounded projection $a\mapsto eae$ on $\Q(X)$. Thus the displayed identifications are Banach-algebra isomorphisms. If $X$ is separable, then its closed subspace $Y$ is separable.
\end{proof}

For the nonseparable part of the argument, we use the ideal of operators with separable range. For a Banach space $X$, set
\begin{equation*}
    \Ssep(X)=\{T\in\B(X):\overline{T(X)}\text{ is separable}\}.
\end{equation*}
We record the properties that will be needed later.

\begin{lemma}[The separable-range ideal]\label{lem:separable-range-ideal}
For every Banach space $X$, the set $\Ssep(X)$ is a closed two-sided ideal of $\B(X)$, and
\begin{equation*}
    \Jop(X)\subseteq\K(X)\subseteq\Ssep(X).
\end{equation*}
If $X$ is nonseparable, then $\Ssep(X)$ is proper and, consequently,
\begin{equation*}
    \mathcal R_X=\Ssep(X)/\Jop(X)
\end{equation*}
is a proper closed two-sided ideal of $\Q(X)$.
\end{lemma}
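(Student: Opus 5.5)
We verify the listed properties in turn.

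\emph{$\Ssep(X)$ is a two-sided ideal.} If $S,T\in\Ssep(X)$, then $\overline{(S+T)(X)}\subseteq\overline{S(X)+T(X)}$. The right-hand side is the closure of the sum of two separable subspaces, and so it is separable. Scalar multiples are clear. For $R\in\B(X)$ we have $\overline{RT(X)}\subseteq\overline{R\bigl(\overline{T(X)}\bigr)}$. This is the closure of the continuous image of a separable set, hence separable. Likewise $TR(X)\subseteq T(X)$, so $TR\in\Ssep(X)$.

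\emph{$\Ssep(X)$ is closed.} Let $T_n\in\Ssep(X)$ with $T_n\to T$, and let $Z$ be the closed linear span of $\bigcup_n T_n(X)$, which is separable. For each $x$, $Tx=\lim_n T_nx\in Z$. Hence $\overline{T(X)}\subseteq Z$ is separable.

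\emph{The inclusions.} The inclusion $\Jop(X)\subseteq\K(X)$ is \eqref{eq:J-contained-in-K}. For a compact $K$, the set $K(B_X)$ is relatively compact in a metric space, hence separable. Then $K(X)=\bigcup_n nK(B_X)$ is separable, and so is its closure.

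\emph{Properness and the quotient ideal.} If $X$ is nonseparable, then $I_X\notin\Ssep(X)$, so $\Ssep(X)$ is proper. The ideal $\Ssep(X)$ contains $\Jop(X)$, and the quotient map $q$ is a surjective algebra homomorphism. Hence $\mathcal R_X=q(\Ssep(X))$ is a two-sided ideal of $\Q(X)$. It is proper because $1_{\Q(X)}=q(I_X)\in\mathcal R_X$ would give $I_X-S\in\Jop(X)\subseteq\Ssep(X)$ for some $S\in\Ssep(X)$, and therefore $I_X\in\Ssep(X)$, which is false.

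For closedness, the standard fact is that if $\Jop(X)\subseteq\Ssep(X)$ are closed, then $q(\Ssep(X))$ is closed. Indeed, $q^{-1}(q(\Ssep(X)))=\Ssep(X)+\Jop(X)=\Ssep(X)$ is closed, and $q$ is a quotient map. Equivalently, $\Ssep(X)/\Jop(X)$ is isometrically a closed subspace of $\B(X)/\Jop(X)$.

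None of these steps is a genuine obstacle. The only points needing care are the closedness argument via the separable span $Z$, and the identity $q^{-1}(\mathcal R_X)=\Ssep(X)$, which gives both the closedness and the properness of $\mathcal R_X$.
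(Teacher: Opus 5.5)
Your proof is correct and follows the paper's argument essentially step by step. It uses the same separable closed span of $\bigcup_n T_n(X)$ for closedness, the same decomposition $K(X)=\bigcup_n nK(B_X)$ for compact operators, and the same identity $q^{-1}(\mathcal R_X)=\Ssep(X)$ together with $I_X-S\in\Jop(X)\subseteq\Ssep(X)$ to get that $\mathcal R_X$ is closed and proper.
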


\begin{proof}
The set $\Ssep(X)$ is a linear subspace of $\B(X)$. Indeed, if $S,T\in\Ssep(X)$, then the closed linear span of $S(X)\cup T(X)$ is separable and contains the range of every linear combination of $S$ and $T$.

Let $T\in\Ssep(X)$ and let $A,B\in\B(X)$. The range of $AT$ is contained in the image under $A$ of the separable space $\overline{T(X)}$, while
\begin{equation*}
    TB(X)\subseteq T(X).
\end{equation*}
Thus both $AT$ and $TB$ have separable range, so $\Ssep(X)$ is a two-sided ideal.

Every compact operator belongs to $\Ssep(X)$. Indeed, if $K\in\K(X)$, then $\overline{K(B_X)}$ is compact and therefore separable, while
\begin{equation*}
    K(X)=\bigcup_{n\in\N}nK(B_X).
\end{equation*}
It follows that $\overline{K(X)}$ is separable. Together with \eqref{eq:J-contained-in-K}, this proves the displayed inclusions.

To see that the ideal is closed, suppose that $(T_n)_{n\in\N}\subseteq\Ssep(X)$ and $T_n\to T$ in norm. The closed linear span
\begin{equation*}
    M=\overline{\spanop\!\left(\bigcup_{n\in\N}T_n(X)\right)}
\end{equation*}
is separable. For every $x\in X$, the sequence $(T_nx)_{n\in\N}$ converges to $Tx$, so $Tx\in M$. Hence $T\in\Ssep(X)$.

Suppose now that $X$ is nonseparable. Then $I_X\notin\Ssep(X)$, and therefore $\Ssep(X)$ is proper. Since
\begin{equation*}
    q^{-1}(\mathcal R_X)=\Ssep(X),
\end{equation*}
its image $\mathcal R_X$ in $\Q(X)$ is a closed two-sided ideal. It is proper as well. Indeed, if $q(I_X)\in\mathcal R_X$, then there is $S\in\Ssep(X)$ such that
\begin{equation*}
    I_X-S\in\Jop(X)\subseteq\Ssep(X).
\end{equation*}
This would imply $I_X\in\Ssep(X)$, a contradiction.
\end{proof}

When an idempotent belongs to $\mathcal R_X$, the complemented subspace given by the corner lemma is separable even if the ambient space is not.

\begin{lemma}[Corners inside the separable-range ideal]\label{lem:separable-range-corner}
Let $X$ be a Banach space and let
\begin{equation*}
    e\in\Ssep(X)/\Jop(X)\subseteq\Q(X)
\end{equation*}
be an idempotent. Then there is a separable complemented closed subspace $Y\subseteq X$ such that
\begin{equation*}
    e\Q(X)e\cong\Q(Y).
\end{equation*}
\end{lemma}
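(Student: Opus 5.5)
We follow the proof of \Cref{lem:calkin-corners} verbatim; the only new point is that the lifted idempotent can be taken to have separable range. Since $e\in\mathcal R_X=\Ssep(X)/\Jop(X)$, we choose a representative $T\in\Ssep(X)$ with $q(T)=e$. This is possible because $q^{-1}(\mathcal R_X)=\Ssep(X)$, as noted in \Cref{lem:separable-range-ideal}. As $e$ is idempotent, $T^2-T\in\Jop(X)$, and \Cref{lem:essential-idempotent-lift} yields an idempotent $P\in\B(X)$ with $P-T\in\Jop(X)$.

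The key step is to check that $P\in\Ssep(X)$. We write $P=T+(P-T)$. Here $T\in\Ssep(X)$ by choice, and $P-T\in\Jop(X)\subseteq\K(X)\subseteq\Ssep(X)$ by \Cref{lem:separable-range-ideal}. Since $\Ssep(X)$ is a linear subspace, $P\in\Ssep(X)$. Hence $Y=PX=P(X)$ is a closed subspace (the range of an idempotent is closed) contained in the separable space $\overline{P(X)}$, and is therefore separable. It is complemented, with complement $(I_X-P)X$.

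We then repeat the remainder of the proof of \Cref{lem:calkin-corners} with this particular $P$. The restriction map $\rho\colon P\B(X)P\to\B(Y)$ is a Banach-algebra isomorphism satisfying \eqref{eq:corner-selected-ideal}. The restriction of $q$ maps $P\B(X)P$ onto $e\Q(X)e$ with kernel $P\B(X)P\cap\Jop(X)$. Together these give
\begin{equation*}
e\Q(X)e\cong P\B(X)P/(P\B(X)P\cap\Jop(X))\cong\Q(Y).
\end{equation*}
The argument there never used separability of $X$; it used only that $q(P)=e$ with $P$ idempotent. So it applies without change.

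There is no real obstacle here. The only subtlety is that the idempotent produced by the Riesz functional calculus in \Cref{lem:essential-idempotent-lift} is not obviously related to $T$ beyond $P-T\in\Jop(X)$. That relation is exactly enough, because $\Jop(X)\subseteq\Ssep(X)$. Note also that the choice of lift does not matter: any lift of $e$ lies in $\Ssep(X)$, since lifts of $e$ differ by elements of $\Jop(X)\subseteq\Ssep(X)$.
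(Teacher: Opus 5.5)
Your proposal is correct and follows the paper's proof essentially step for step. You choose a lift $T\in\Ssep(X)$, lift it to an idempotent $P$ via \Cref{lem:essential-idempotent-lift}, note that $P=T+(P-T)\in\Ssep(X)$ because $\Jop(X)\subseteq\Ssep(X)$, and then rerun the corner computation of \Cref{lem:calkin-corners} with the separable complemented subspace $Y=PX$.
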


\begin{proof}
Choose $T\in\Ssep(X)$ with $q(T)=e$. By \Cref{lem:essential-idempotent-lift}, there is an idempotent $P\in\B(X)$ such that
\begin{equation*}
    P-T\in\Jop(X).
\end{equation*}
Since $\Jop(X)\subseteq\Ssep(X)$ and $\Ssep(X)$ is linear, it follows that
\begin{equation*}
    P=T+(P-T)\in\Ssep(X).
\end{equation*}
The range
\begin{equation*}
    Y=PX
\end{equation*}
is closed because $P$ is an idempotent, and it is separable because $P$ has separable range. It is complemented by $P$. The computation in \Cref{lem:calkin-corners} now gives
\begin{equation*}
    e\Q(X)e\cong\Q(Y).
\end{equation*}
\end{proof}

\bigskip
\section{The matrix-unit obstruction}\label{sec:matrix-obstruction}

We prove the operator-theoretic obstruction used in the nonseparable case. If
\begin{equation*}
    \Ssep(X)=\Jop(X),
\end{equation*}
then a countable system of matrix units with a $c_0$-type first-row estimate and a bounded first column cannot lie in $\Q(X)$, under either quotient convention. The argument follows the matrix $\ell_1$ part of \cite[Lemmas~6.2, 6.3 and 6.9]{AcuavivaAcuaviva2026Wild}. For completeness, we reproduce the full proof here.

We lift the matrix units to $\B(X)$ and pass to a quotient on which all matrix-unit relations become exact. Two elementary lemmas are needed. The first concerns operators whose difference has range in the subspace being quotiented out.

\begin{lemma}[Quotient correction]\label{lem:quotient-correction}
Let $X$ be a Banach space, let $N\subseteq X$ be a closed subspace, and let
\begin{equation*}
    \pi\colon X\longrightarrow X/N
\end{equation*}
be the quotient map. Suppose that $U,K \in \B(X)$, $U(N)\subseteq N$, and $K(X)\subseteq N$. Then $U+K$ leaves $N$ invariant, $U$ and $U+K$ induce the same operator on $X/N$, and the induced operator $\overline U$ satisfies
\begin{equation*}
    \norm{\overline U}\leq\norm{U+K}.
\end{equation*}
\end{lemma}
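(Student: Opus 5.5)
The plan is to check the three assertions of \Cref{lem:quotient-correction} in the order stated. Each reduces to the two hypotheses $U(N)\subseteq N$ and $K(X)\subseteq N$, together with the definition of the quotient norm on $X/N$.

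First, invariance. For $n\in N$ we have $(U+K)n=Un+Kn$. Here $Un\in N$ by hypothesis, and $Kn\in K(X)\subseteq N$. Hence $(U+K)(N)\subseteq N$.

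Second, the induced operators. Recall that an operator $V\in\B(X)$ with $V(N)\subseteq N$ induces a well-defined operator $\overline V$ on $X/N$ by
\begin{equation*}
    \overline V(\pi x)=\pi(Vx).
\end{equation*}
This is well defined because $x-x'\in N$ implies $Vx-Vx'\in N$. Applying this to $U$ and to $U+K$, we get, for every $x\in X$,
\begin{equation*}
    \overline{U+K}(\pi x)=\pi(Ux+Kx)=\pi(Ux)+\pi(Kx)=\pi(Ux)=\overline U(\pi x),
\end{equation*}
since $Kx\in N=\ker\pi$. Thus $\overline{U+K}=\overline U$.

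Third, the norm estimate. Let $\xi\in X/N$ and $\varepsilon>0$. Choose $x\in X$ with $\pi x=\xi$ and $\norm{x}\leq\norm{\xi}+\varepsilon$. Using the previous step and $\norm{\pi}\leq1$,
\begin{equation*}
    \norm{\overline U\xi}=\norm{\pi((U+K)x)}\leq\norm{(U+K)x}\leq\norm{U+K}\,(\norm{\xi}+\varepsilon).
\end{equation*}
Letting $\varepsilon\to0$ and taking the supremum over $\norm{\xi}\leq1$ gives $\norm{\overline U}\leq\norm{U+K}$.

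None of these steps is a real obstacle. The only point needing care is that the induced map is defined through an arbitrary lift $x$ of $\xi$, and the lift must be chosen almost norm-minimal so that the quotient norm enters the estimate. The same computation also shows that $\overline U$ is bounded.
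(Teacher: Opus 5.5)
Your proof is correct and follows the paper's argument. Invariance comes from $K(N)\subseteq K(X)\subseteq N$, the induced maps agree because $Kx\in\ker\pi$, and the norm bound follows; the only difference is that you prove, via an almost norm-minimal lift, the fact the paper simply cites, namely that an induced operator's norm is at most the norm of any operator inducing it.
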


\begin{proof}
Since $K(X)\subseteq N$, we also have $K(N)\subseteq N$, and therefore $U+K$ leaves $N$ invariant. For every $x \in X$,
\begin{equation*}
    \pi((U+K)x)=\pi(Ux)+\pi(Kx)=\pi(Ux).
\end{equation*}
Thus $U$ and $U+K$ induce the same operator on $X/N$. The norm of an induced operator is bounded by the norm of every operator inducing it, which gives the stated estimate.
\end{proof}

We shall place the ranges of the compact operators in one separable subspace which is invariant under the chosen lifts. The following lemma constructs such a subspace for any countable families of bounded and compact operators.

\begin{lemma}[A separable invariant subspace]\label{lem:absorb}
Let $X$ be a Banach space, let $(R_n)_{n \in \N}\subseteq\B(X)$, and let $(K_m)_{m \in \N}\subseteq\K(X)$. Then there is a separable closed subspace $N\subseteq X$ such that
\begin{equation*}
    K_m(X)\subseteq N\qquad(m \in \N)
\end{equation*}
and
\begin{equation*}
    R_n(N)\subseteq N\qquad(n \in \N).
\end{equation*}
\end{lemma}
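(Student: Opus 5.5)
The plan is a standard countable closure argument. First we collect all the ranges of the compact operators into one separable subspace. Then we close it up under the countably many operators $R_n$ by iterating countably many times.

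For each $m\in\N$, the set $\overline{K_m(B_X)}$ is compact and hence separable. Since $K_m(X)=\bigcup_{k\in\N}kK_m(B_X)$, the closure $\overline{K_m(X)}$ is separable; this is exactly the argument in \Cref{lem:separable-range-ideal}. Choose a countable set $C_0\subseteq X$ whose closed linear span contains every $K_m(X)$; the union over $m$ of countable dense subsets of the spaces $\overline{K_m(X)}$ will do.

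Define countable sets inductively by
\begin{equation*}
    C_{k+1}=C_k\cup\{R_nx : n\in\N,\ x\in C_k\}\qquad(k\geq 0),
\end{equation*}
and put $C=\bigcup_{k\geq0}C_k$. Then $C$ is countable and satisfies $R_n(C)\subseteq C$ for every $n$. Let
\begin{equation*}
    N=\overline{\spanop(C)}.
\end{equation*}
This $N$ is a separable closed subspace, because the rational (or complex-rational) linear combinations of elements of $C$ form a countable dense subset. It contains each $K_m(X)$ because $C_0\subseteq C$.

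For invariance, linearity of $R_n$ gives $R_n(\spanop C)\subseteq\spanop C$. Continuity then gives $R_n(N)\subseteq\overline{R_n(\spanop C)}\subseteq N$.

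No step is a genuine obstacle. The only points needing care are that the compact ranges are separable, which is already handled in \Cref{lem:separable-range-ideal}, and that invariance passes from the countable generating set to its closed span, which follows from linearity and continuity of each $R_n$.
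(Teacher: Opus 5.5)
Your proof is correct and follows essentially the same route as the paper: first gather the separable ranges of the $K_m$, then close up under all finite words in the $R_n$, and take the closed linear span. The only difference is cosmetic. You iterate on a countable generating set rather than on the separable subspace $N_0$ itself, which makes the separability of $N$ and the passage of invariance to the closure fully explicit.
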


\begin{proof}
For each $m \in \N$, the closed linear span of $K_m(X)$ is separable. Indeed, $\overline{K_m(B_X)}$ is compact and hence separable, and
\begin{equation*}
    K_m(X)=\bigcup_{r=1}^{\infty}rK_m(B_X).
\end{equation*}
Let $N_0$ be the closed linear span of
\begin{equation*}
    \bigcup_{m \in \N}K_m(X).
\end{equation*}
Then $N_0$ is separable.

Let $N$ be the closed linear span of all vectors of the form
\begin{equation*}
    R_{i_1}R_{i_2}\cdots R_{i_k}x,
\end{equation*}
where $k\geq 0$, $(i_r)_{r=1}^{k}$ is a finite sequence in $\N$, and $x \in N_0$. There are only countably many finite words in the family $(R_n)_{n \in \N}$, and the image of a separable space under an operator is separable. Hence $N$ is separable. By construction, it contains every $K_m(X)$ and is invariant under every $R_n$.
\end{proof}

We now state the obstruction. The first-row estimate is a $c_0$-type estimate on the scalar coefficients, while the first-column estimate keeps the vectors generated from a fixed non-zero corner uniformly bounded.

\begin{proposition}[$\ell_1$-row obstruction]\label{prop:matrix-obstruction}
Let $X$ be nonseparable and suppose that $\Ssep(X)=\Jop(X)$. Then $\Q(X)$ cannot contain a family $(e_{ij})_{i,j \in \N}$ satisfying the following conditions.
\begin{enumerate}[label=\textup{(\roman*)}]
    \item\label{item:matrix-obstruction-relations} $e_{ij}e_{kl}=\delta_{jk}e_{il}$ for all $i,j,k,l \in \N$;
    \item\label{item:matrix-obstruction-nonzero} $e_{11}\ne 0$;
    \item\label{item:matrix-obstruction-row} there is a constant $C_r<\infty$ such that
    \begin{equation*}
        \left\|\sum_{j \in F}a_je_{1j}\right\|\leq C_r\sup_{j \in F}|a_j|
    \end{equation*}
    for every finite non-empty $F\subseteq \N$ and every scalar family $(a_j)_{j \in F}$;
    \item\label{item:matrix-obstruction-column} there is a constant $C_c<\infty$ such that
    \begin{equation*}
        \sup_{i \in \N}\norm{e_{i1}}\leq C_c.
    \end{equation*}
\end{enumerate}
\end{proposition}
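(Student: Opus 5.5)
The plan is to lift the matrix units to $\B(X)$, pass to a quotient $X/N$ in which the relations become exact, and then build a noncompact operator on $X$ with separable range. That contradicts $\Ssep(X)=\Jop(X)\subseteq\K(X)$.

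\emph{Lifting and exactness.} Choose lifts $T_{ij}\in\B(X)$ with $q(T_{ij})=e_{ij}$. The relations \ref{item:matrix-obstruction-relations} give countably many defects
\begin{equation*}
T_{ij}T_{kl}-\delta_{jk}T_{il}\in\Jop(X)\subseteq\K(X).
\end{equation*}
To keep the norm estimates, I would also fix, for every finite $F\subseteq\N$ and every family $(a_j)_{j\in F}$ of rational complex scalars, a lift $U_{F,a}$ of $\sum_{j\in F}a_je_{1j}$ with
\begin{equation*}
\norm{U_{F,a}}\le 2C_r\sup_{j \in F}|a_j|.
\end{equation*}
I would likewise fix lifts $V_i$ of $e_{i1}$ with $\norm{V_i}\le 2C_c$. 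The differences $U_{F,a}-\sum_{j\in F} a_jT_{1j}$ and $V_i-T_{i1}$ lie in $\Jop(X)$ and form a countable family. Apply \Cref{lem:absorb} with $(R_n)$ the family $(T_{ij})$ and $(K_m)$ all the compact defects and differences above. This gives a separable closed subspace $N$ that is invariant under every $T_{ij}$ and contains the ranges of all these compacts. Then $U_{F,a}$ and $V_i$ also leave $N$ invariant.

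By \Cref{lem:quotient-correction}, the induced operators $S_{ij}$ on $X/N$ satisfy $S_{ij}S_{kl}=\delta_{jk}S_{il}$ exactly. They also satisfy
\begin{equation*}
\Bigl\|\sum_{j \in F}a_jS_{1j}\Bigr\|\le 2C_r\sup_{j \in F}|a_j|
\quad\text{and}\quad
\norm{S_{i1}}\le 2C_c.
\end{equation*}
The row estimate holds first for rational coefficients and then, by density, for all scalars.

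\emph{Non-vanishing of $S_{11}$.} Since $e_{11}\neq0$, we have $T_{11}\notin\Jop(X)=\Ssep(X)$, so $T_{11}(X)$ is not contained in the separable subspace $N$. Hence $S_{11}\ne0$. Because $S_{11}$ is idempotent, we can pick $z=S_{11}z\in X/N$ with $\norm z=1$.

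Put $z_i=S_{i1}z$, so $\norm{z_i}\le 2C_c$. The relations give $S_{1j}z_i=\delta_{ij}z$. Consequently, for $i\neq k$,
\begin{equation*}
1=\norm{S_{1i}(z_i-z_k)}\le 2C_r\norm{z_i-z_k},
\end{equation*}
so $(z_i)$ is a bounded, uniformly separated sequence.

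\emph{The $\ell_1$ factorisation.} This is the key step and the main obstacle. Take $\varphi\in(X/N)^*$ with $\varphi(z)=1$ and $\norm\varphi=1$. For $x\in X/N$ the row estimate gives
\begin{equation*}
\Bigl|\sum_{j \in F} a_j\varphi(S_{1j}x)\Bigr|\le 2C_r\sup_{j \in F}|a_j|\,\norm x.
\end{equation*}
Hence $\Lambda x=(\varphi(S_{1j}x))_{j}$ defines a bounded operator $X/N\to\ell_1$. This is exactly where the $c_0$-type row estimate is used.

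Lift each $z_i$ to some $x_i\in X$ with $\norm{x_i}\le 4C_c$. Define $\Gamma\colon\ell_1\to X$ by $\Gamma(b)=\sum_i b_ix_i$, which is bounded. Set $W=\Gamma\Lambda\pi\in\B(X)$. Its range lies in the closed span of the $x_i$, so it is separable and $W\in\Ssep(X)$.

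Finally, $\Lambda\pi x_k=\Lambda z_k=(\delta_{jk})_j$, so $Wx_k=x_k$. The sequence $(x_k)$ is bounded and has no Cauchy subsequence, since $\pi(x_k)=z_k$ are uniformly separated. Therefore $W$ is not compact, contradicting $\Ssep(X)=\Jop(X)\subseteq\K(X)$. I expect the bookkeeping in the first step to be the delicate point: a single countable family of lifts must realise the row norms, and all the correcting compacts must be absorbed into one invariant $N$.
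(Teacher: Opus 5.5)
Your proposal is correct and follows essentially the same route as the paper. You lift the matrix units, absorb all defects and norm-correcting compacts into one separable subspace $N$ invariant under the lifts, pass to $X/N$ to get exact matrix units with the row and column bounds, and factor through $\ell_1$ using $\varphi$ and the lifts $x_i$ of $S_{i1}z$.

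The only difference is cosmetic. The paper concludes from $\Lambda\pi\Gamma=I_{\ell_1}$ (in your notation) that $W$ is a projection with infinite-dimensional separable range, whereas you deduce noncompactness of $W$ from $Wx_k=x_k$ and the uniform separation of $(z_k)$. Your factor $2$ in the lift norms is fine since $C_r,C_c\ge\norm{e_{11}}>0$.
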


\begin{proof}
Suppose, towards a contradiction, that such a family $(e_{ij})_{i,j \in \N}$ exists. Let
\begin{equation*}
    q\colon\B(X)\longrightarrow\Q(X)
\end{equation*}
be the quotient map, and choose a family of lifts $(E_{ij})_{i,j \in \N}\subseteq\B(X)$ satisfying
\begin{equation*}
    q(E_{ij})=e_{ij}\qquad(i,j \in \N).
\end{equation*}
The matrix-unit relations hold modulo $\Jop(X)$. Thus, for all $i,j,k,l \in \N$,
\begin{equation*}
    D_{ij,kl}=E_{ij}E_{kl}-\delta_{jk}E_{il} \in \Jop(X).
\end{equation*}

We choose corrections from the selected quotient ideal so that the row and column estimates hold for suitable representatives in $\B(X)$. Let $\F_0=\mathbb Q+i\mathbb Q$, a countable dense subfield of $\C$. For every finite non-empty set $F\subseteq \N$ and every tuple $a=(a_j)_{j \in F} \in \F_0^F$, put
\begin{equation*}
    U_{F,a}=\sum_{j \in F}a_jE_{1j}.
\end{equation*}
Put
\begin{equation*}
    \alpha_{F,a}=\sup_{j \in F}|a_j|.
\end{equation*}
If $\alpha_{F,a}=0$, set $L_{F,a}=0$. Otherwise,
\begin{equation*}
    \norm{q(U_{F,a})}\leq C_r\alpha_{F,a},
\end{equation*}
by \ref{item:matrix-obstruction-row}. Since the quotient norm is the infimum of $\norm{U_{F,a}+L}$ over $L\in\Jop(X)$, we may choose $L_{F,a}\in\Jop(X)$ such that
\begin{equation}
\label{eq:row-correction}
    \norm{U_{F,a}+L_{F,a}}\leq(C_r+1)\alpha_{F,a}.
\end{equation}
Similarly, $\norm{q(E_{i1})}=\norm{e_{i1}}\leq C_c$ by \ref{item:matrix-obstruction-column}. Hence, for every $i \in \N$, there is $H_i\in\Jop(X)$ such that
\begin{equation}
\label{eq:column-correction}
    \norm{E_{i1}+H_i}\leq C_c+1.
\end{equation}

The defect and correction operators
\begin{equation*}
\begin{gathered}
    D_{ij,kl}\quad(i,j,k,l \in \N),\\
    L_{F,a}\quad(F\subseteq \N\text{ finite and non-empty},\ a \in \F_0^F),\\
    H_i\quad(i \in \N)
\end{gathered}
\end{equation*}
belong to $\Jop(X)\subseteq\K(X)$ and form a countable family. Apply \Cref{lem:absorb} to this family and to the countable family $(E_{ij})_{i,j \in \N}$. We obtain a separable closed subspace $N\subseteq X$ such that every $E_{ij}$ leaves $N$ invariant and the range of every defect and correction operator listed above is contained in $N$.

Put
\begin{equation*}
    Z=X/N,
\end{equation*}
and let $\pi\colon X\longrightarrow Z$ be the quotient map. Since each $E_{ij}$ leaves $N$ invariant, it induces a well-defined bounded operator $\overline E_{ij}\in\B(Z)$ as follows: given $z\in Z$, choose $x\in X$ such that $z=\pi x$ and set
\begin{equation*}
    \overline E_{ij}z=\pi(E_{ij}x).
\end{equation*}
Indeed, if $\pi x=\pi y$, then $x-y\in N$, so $E_{ij}(x-y)\in N$ and therefore $\pi(E_{ij}x)=\pi(E_{ij}y)$. Moreover, $\|\overline E_{ij}\|\leq\|E_{ij}\|$. Since $D_{ij,kl}(X)\subseteq N$, for every $x\in X$ we have
\begin{equation*}
    \bigl(\overline E_{ij}\overline E_{kl}-\delta_{jk}\overline E_{il}\bigr)(\pi x)=\pi\bigl((E_{ij}E_{kl}-\delta_{jk}E_{il})x\bigr)=\pi(D_{ij,kl}x)=0.
\end{equation*}
Since $\pi$ is surjective, the induced operators satisfy the exact matrix-unit relations
\begin{equation}
\label{eq:exact-matrix-units}
    \overline E_{ij}\overline E_{kl}=\delta_{jk}\overline E_{il}.
\end{equation}

Each correction operator has range in $N$ and therefore induces the zero operator on $Z$. By \Cref{lem:quotient-correction}, \eqref{eq:row-correction} gives
\begin{equation}
\label{eq:induced-row-bound-rational}
    \left\|\sum_{j \in F}a_j\overline E_{1j}\right\|\leq(C_r+1)\sup_{j \in F}|a_j|
\end{equation}
whenever $F\subseteq \N$ is finite and non-empty and $(a_j)_{j \in F} \in \F_0^F$. For a fixed finite non-empty set $F$, the map
\begin{equation*}
    (a_j)_{j \in F}\longmapsto\sum_{j \in F}a_j\overline E_{1j}
\end{equation*}
is continuous. Since $\F_0^F$ is dense in $\C^F$, \eqref{eq:induced-row-bound-rational} extends to
\begin{equation}
\label{eq:induced-row-bound}
    \left\|\sum_{j \in F}a_j\overline E_{1j}\right\|\leq(C_r+1)\sup_{j \in F}|a_j|
\end{equation}
for every finite non-empty $F\subseteq \N$ and every scalar family $(a_j)_{j \in F}$. Similarly, \eqref{eq:column-correction} gives
\begin{equation}
\label{eq:induced-column-bound}
    \sup_{i \in \N}\norm{\overline E_{i1}}\leq C_c+1.
\end{equation}

We claim that
\begin{equation*}
    \overline E_{11}\ne 0.
\end{equation*}
Indeed, if $\overline E_{11}=0$, then $E_{11}(X)\subseteq N$, so $E_{11}$ has separable range. The hypothesis $\Ssep(X)=\Jop(X)$ would then imply that $E_{11}\in\Jop(X)$, and hence
\begin{equation*}
    e_{11}=q(E_{11})=0,
\end{equation*}
contrary to \ref{item:matrix-obstruction-nonzero}.

Choose $y \in Z$ such that $\overline E_{11}y\ne 0$, and put
\begin{equation*}
    z_0=\overline E_{11}y.
\end{equation*}
Then $z_0\ne 0$ and, by \eqref{eq:exact-matrix-units},
\begin{equation*}
    \overline E_{11}z_0=z_0.
\end{equation*}
By the Hahn--Banach theorem, choose $\varphi \in Z^*$ with $\varphi(z_0)=1$.

The first-row estimate gives an $\ell_1$-valued operator. Indeed, define
\begin{equation*}
    \Theta x=\bigl(\varphi(\overline E_{1j}\pi x)\bigr)_{j \in \N}\qquad(x \in X).
\end{equation*}
For $x \in X$ and $n \in \N$, \eqref{eq:induced-row-bound} gives
\begin{equation*}
\begin{aligned}
    \sum_{j=1}^{n}|\varphi(\overline E_{1j}\pi x)|
    &=\sup_{\substack{(a_j)_{j=1}^{n} \in \C^n\\ |a_j|\leq1\ (1\leq j\leq n)}}
      \left|\varphi\left(\sum_{j=1}^{n}a_j\overline E_{1j}\pi x\right)\right|\\
    &\leq(C_r+1)\norm{\varphi}\,\norm{\pi x}\leq(C_r+1)\norm{\varphi}\,\norm{x}.
\end{aligned}
\end{equation*}
Taking the supremum over $n$ shows that $\Theta x \in \ell_1$ and that
\begin{equation*}
    \norm{\Theta x}_{\ell_1}\leq(C_r+1)\norm{\varphi}\,\norm{x}.
\end{equation*}
Thus $\Theta\colon X\to\ell_1$ is bounded.

The first-column estimate gives a bounded sequence which $\Theta$ maps onto the unit vector basis. For $i \in \N$, put
\begin{equation*}
    z_i=\overline E_{i1}z_0.
\end{equation*}
By \eqref{eq:induced-column-bound},
\begin{equation*}
    \norm{z_i}\leq(C_c+1)\norm{z_0}
    \qquad(i \in \N),
\end{equation*}
so the sequence $(z_i)_{i \in \N}$ is bounded. Choose a family of lifts $(x_i)_{i \in \N}\subseteq X$ such that
\begin{equation*}
    \pi x_i=z_i
    \qquad\text{and}\qquad
    \norm{x_i}\leq\norm{z_i}+1.
\end{equation*}
Then $(x_i)_{i \in \N}$ is bounded. For $i,k \in \N$, \eqref{eq:exact-matrix-units} gives
\begin{equation*}
\begin{aligned}
    \varphi(\overline E_{1k}z_i)
    &=\varphi(\overline E_{1k}\overline E_{i1}z_0)\\
    &=\delta_{ki}\varphi(\overline E_{11}z_0)
    =\delta_{ki}.
\end{aligned}
\end{equation*}
Consequently, $\Theta x_i=e_i$ for every $i\in\N$, where $(e_i)_{i\in\N}$ denotes the unit vector basis of $\ell_1$. Since $(x_i)_{i\in\N}$ is bounded, for every $(a_i)_{i\in\N}\in\ell_1$ the series $\sum_{i=1}^{\infty}a_ix_i$ converges absolutely, and the formula
\begin{equation*}
    T((a_i)_{i\in\N})=\sum_{i=1}^{\infty}a_ix_i
\end{equation*}
defines a bounded operator $T\colon\ell_1\to X$ satisfying $\norm{T}\leq\sup_i\norm{x_i}$. Since $Te_i=x_i$, we have $\Theta Te_i=e_i$ for every $i\in\N$. Since the linear span of $(e_i)_{i\in\N}$ is dense in $\ell_1$, it follows that
\begin{equation*}
    \Theta T=I_{\ell_1}.
\end{equation*}
In particular, $T$ is injective. Set $P=T\Theta$. Then
\begin{equation*}
    P^2=T(\Theta T)\Theta=T\Theta=P.
\end{equation*}
Moreover, $P(X)\subseteq T(\ell_1)$, while $P(Ta)=Ta$ for every $a\in\ell_1$, and hence $P(X)=T(\ell_1)$. Thus $P$ is a bounded projection onto the separable infinite-dimensional subspace $T(\ell_1)$. Consequently, $P\in\Ssep(X)=\Jop(X)\subseteq\K(X)$, which is impossible because a compact projection has finite-dimensional range.
\end{proof}

\begin{remark}
The two quantitative assumptions in \Cref{prop:matrix-obstruction} have distinct roles. The first-row estimate turns the scalar coordinates determined by the matrix units into an $\ell_1$-valued operator, while the bounded first column produces a bounded sequence which is mapped to the unit vector basis. Their combination forces a noncompact operator with separable range.
\end{remark}

\bigskip
\section{The one-ideal algebra}\label{sec:one-ideal-algebra}

We now construct the algebra used in \Cref{thm:main}. Let $D$ be a unital $C^*$-algebra, and put
\begin{equation*}
    E_D=\ell_1(\N,D),
\end{equation*}
with norm
\begin{equation*}
    \norm{x}=\sum_{n \in \N}\norm{x_n}_D
    \qquad
    \bigl(x=(x_n)_{n \in \N} \in E_D\bigr).
\end{equation*}
For $d \in D$ and $i,j \in \N$, define $\theta_{ij}(d) \in \B(E_D)$ by
\begin{equation*}
    \bigl(\theta_{ij}(d)x\bigr)_k=
    \begin{cases}
        dx_j, & k=i,\\
        0, & k\ne i.
    \end{cases}
\end{equation*}
For every $d \in D$ and $i,j \in \N$,
\begin{equation}
\label{eq:theta-norm}
    \norm{\theta_{ij}(d)}=\norm{d}_D.
\end{equation}
Indeed, the inequality $\norm{\theta_{ij}(d)}\leq\norm{d}_D$ follows directly from the $\ell_1$ norm. For the reverse inequality, apply $\theta_{ij}(d)$ to the vector whose $j$th coordinate is $1_D$ and whose remaining coordinates are zero.

The multiplication is given by
\begin{equation}
\label{eq:theta-relations}
    \theta_{ij}(d)\theta_{kl}(e)=\delta_{jk}\theta_{il}(de)
    \qquad(d,e \in D;\ i,j,k,l \in \N).
\end{equation}
Consequently, the linear span of these operators is a subalgebra of $\B(E_D)$. Let
\begin{equation*}
    J_D=\overline{\spanop}\{\theta_{ij}(d):i,j \in \N,\ d \in D\}\subseteq\B(E_D),
\end{equation*}
and define
\begin{equation*}
    A_D=\C I_{E_D}+J_D\subseteq\B(E_D).
\end{equation*}
Thus $J_D$ is the norm closure of the finite-support $D$-valued matrices acting on $E_D$. It is a closed subalgebra of $\B(E_D)$, while $A_D$ is closed because it is the sum of $J_D$ and the finite-dimensional space $\C I_{E_D}$. Moreover, $J_D$ is a closed two-sided ideal of $A_D$. We write
\begin{equation*}
    \varepsilon_{ij}=\theta_{ij}(1_D)
    \qquad(i,j \in \N)
\end{equation*}
for the canonical scalar matrix units.
In particular, $A_D$ is a unital Banach algebra.

The next proposition gives the two structural properties on which the proof rests: $J_D$ is simple and nonunital, and the first diagonal corner of $A_D$ is $D$.

\begin{proposition}[Ideal structure]\label{prop:ideal-structure}
Suppose that $D$ is topologically simple. Then $J_D$ is nonunital and topologically simple. Consequently, $A_D$ has exactly three closed two-sided ideals:
\begin{equation*}
    \{0\}\subsetneq J_D\subsetneq A_D.
\end{equation*}
Moreover,
\begin{equation*}
    \varepsilon_{11}A_D\varepsilon_{11}\cong D
\end{equation*}
as Banach algebras.
\end{proposition}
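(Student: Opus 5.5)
I will first build the basic tools: compressions by the diagonal matrix units $\varepsilon_{ii}$ and the coordinate maps. For $T\in J_D$ and $i,j\in\N$, the operator $\varepsilon_{ii}T\varepsilon_{jj}$ equals $\theta_{ij}(t_{ij})$ for a unique $t_{ij}\in D$. To see this, note that the set of $T$ for which this holds is a closed subspace: by \eqref{eq:theta-norm}, $\theta_{ij}$ is an isometry, so its range is closed. This subspace contains the spanning operators $\theta_{kl}(d)$ by \eqref{eq:theta-relations}. The same argument applies to $T\in A_D$, with $\varepsilon_{ii}I\varepsilon_{jj}=\delta_{ij}\theta_{ii}(1_D)$.

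I will also record that $T\in J_D$ is zero once all its entries $t_{ij}$ vanish. Indeed, $\theta_{ij}(t_{ij})x=\varepsilon_{ii}T\varepsilon_{jj}x$, and the finitely supported vectors are dense in $E_D$, while $\sum_i\varepsilon_{ii}y=y$ converges in $\ell_1$. This gives $\varepsilon_{ii}Tx_j$ for finitely supported $x$.

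\textbf{Corner.} Using the above, $\varepsilon_{11}A_D\varepsilon_{11}=\theta_{11}(D)$. The map $d\mapsto\theta_{11}(d)$ is multiplicative by \eqref{eq:theta-relations} and isometric by \eqref{eq:theta-norm}. So it is a Banach-algebra isomorphism $D\to\varepsilon_{11}A_D\varepsilon_{11}$; its range is closed, being an isometric image.

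\textbf{Nonunital.} Suppose $u\in J_D$ is a unit for $J_D$. Then $u\varepsilon_{nn}=\varepsilon_{nn}$ for all $n$. Approximate $u$ within $1/2$ by a finite combination $v$ of $\theta_{ij}(d)$ with $i,j\le N$. Then $\varepsilon_{N+1,N+1}=u\varepsilon_{N+1,N+1}$ differs from $v\varepsilon_{N+1,N+1}=0$ by at most $1/2$, contradicting $\norm{\varepsilon_{N+1,N+1}}=1$.

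\textbf{Simplicity.} This is the main step. Let $I\ne0$ be a closed ideal of $J_D$ and pick $0\ne T\in I$. By the vanishing criterion some entry $t_{ij}\ne0$. Then $\theta_{kl}(a t_{ij} b)=\theta_{ki}(a)\,T\,\theta_{jl}(b)\in I$ for all $a,b\in D$ and all $k,l$. Hence for each $k,l$ the set $\{d:\theta_{kl}(d)\in I\}$ is a closed (as $\theta_{kl}$ is isometric) two-sided ideal of $D$. It contains $at_{ij}b$, so it is nonzero, and by topological simplicity of $D$ it equals $D$. Thus $I$ contains all generators, and $I=J_D$. The point requiring care is that ideals of $J_D$ absorb only elements of $J_D$, but that suffices since we multiply by $\theta$'s.

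\textbf{Ideals of $A_D$.} Let $I$ be a closed ideal of $A_D$. Then $I\cap J_D$ is a closed ideal of $J_D$, so it is $0$ or $J_D$.

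First suppose $I\cap J_D=J_D$. Then $J_D\subseteq I$. If $I\ne J_D$, then $I$ contains some $\lambda I_{E_D}+S$ with $\lambda\neq0$ and $S\in J_D$, hence contains $I_{E_D}$, and $I=A_D$.

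Now suppose $I\cap J_D=0$. For $T=\lambda I_{E_D}+S\in I$, we have $T\varepsilon_{11}\in I\cap J_D=0$, so $S\varepsilon_{11}=-\lambda\varepsilon_{11}$. The same argument gives $S\varepsilon_{nn}=-\lambda\varepsilon_{nn}$ for every $n$. By the approximation argument used for nonunitality, this forces $\lambda=0$. Then $T=S\in I\cap J_D=0$, so $I=0$.

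Finally, the three ideals are distinct. We have $J_D\ne0$. Also $J_D\ne A_D$, since $I_{E_D}\notin J_D$ because $J_D$ is nonunital.
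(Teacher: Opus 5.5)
Your proposal is correct and follows essentially the same route as the paper: you read off entries via $\varepsilon_{ii}J_D\varepsilon_{jj}=\theta_{ij}(D)$, sandwich a nonzero entry between $\theta_{ki}(a)$ and $\theta_{jl}(b)$ and invoke simplicity of $D$, use the same finite-support approximation for nonunitality, and make the same case split on $L\cap J_D$ for the ideals of $A_D$. The differences are minor. You find a nonzero entry by a pointwise argument on finitely supported vectors, rather than by $P_naP_n\to a$ in norm; the last sentence of that paragraph is garbled, but the ingredients you list suffice. In the case $L\cap J_D=0$ you rerun the approximation argument on $S\varepsilon_{nn}=-\lambda\varepsilon_{nn}$ instead of exhibiting $-j$ as a two-sided identity for $J_D$.
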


\begin{proof}
We first prove that $J_D$ is topologically simple. Let $I$ be a non-zero closed two-sided ideal of $J_D$, and choose $0\ne a \in I$. For $n \in \N$, put
\begin{equation*}
    P_n=\sum_{r=1}^{n}\varepsilon_{rr}.
\end{equation*}
The sequence $(P_n)_{n \in \N}$ consists of contractions. If $f$ is a finite-support $D$-valued matrix, then $P_nfP_n=f$ for all sufficiently large $n$. Since
\begin{equation*}
    \norm{P_n(a-f)P_n}\leq\norm{a-f},
\end{equation*}
approximating $a$ by such matrices gives
\begin{equation*}
    P_naP_n\longrightarrow a.
\end{equation*}
Since $a\ne 0$, there are $n,r,s \in \N$, with $r,s\leq n$, such that
\begin{equation*}
    \varepsilon_{rr}a\varepsilon_{ss}\ne 0.
\end{equation*}

We claim that
\begin{equation}
\label{eq:rectangular-corner}
    \varepsilon_{rr}J_D\varepsilon_{ss}=\theta_{rs}(D).
\end{equation}
On finite-support matrices, multiplication by $\varepsilon_{rr}$ and $\varepsilon_{ss}$ selects the $(r,s)$ entry. Hence the left-hand side of \eqref{eq:rectangular-corner} is contained in the closure of $\theta_{rs}(D)$. By \eqref{eq:theta-norm}, the map $d\mapsto\theta_{rs}(d)$ is isometric, so its range is closed. The reverse inclusion is immediate. Thus
\begin{equation*}
    \varepsilon_{rr}a\varepsilon_{ss}=\theta_{rs}(d)
\end{equation*}
for some non-zero $d \in D$.

For every $i,j \in \N$ and every $b,c \in D$, \eqref{eq:theta-relations} gives
\begin{equation*}
    \theta_{ir}(b)\theta_{rs}(d)\theta_{sj}(c)=\theta_{ij}(bdc) \in I.
\end{equation*}
Since $D$ is topologically simple, the closed two-sided ideal generated by $d$ is all of $D$, and hence
\begin{equation*}
    \overline{\spanop}(DdD)=D.
\end{equation*}
The ideal $I$ is closed, and $d\mapsto\theta_{ij}(d)$ is continuous. It follows that
\begin{equation*}
    \theta_{ij}(D)\subseteq I
    \qquad(i,j \in \N).
\end{equation*}
Therefore $I=J_D$, and $J_D$ is topologically simple.

We next show that $J_D$ has no identity. Suppose that $u \in J_D$ were a two-sided identity. Choose a finite-support $D$-valued matrix $f$ such that
\begin{equation*}
    \norm{u-f}<\frac12.
\end{equation*}
Choose $j \in \N$ outside the finite set of column indices occurring in $f$. Then
\begin{equation*}
    f\varepsilon_{jj}=0,
    \qquad
    u\varepsilon_{jj}=\varepsilon_{jj}.
\end{equation*}
Since $\norm{\varepsilon_{jj}}=1$, we obtain
\begin{equation*}
    1=\norm{(u-f)\varepsilon_{jj}}\leq\norm{u-f}<\frac12,
\end{equation*}
a contradiction.

Since $J_D$ is nonunital, $I_{E_D}\notin J_D$, and
\begin{equation*}
    A_D=\C I_{E_D}\oplus J_D
\end{equation*}
is an algebraic and topological direct sum. The map
\begin{equation*}
    \lambda I_{E_D}+j\longmapsto\lambda
\end{equation*}
therefore induces an isomorphism
\begin{equation*}
    A_D/J_D\cong\C.
\end{equation*}

Let $L$ be a non-zero closed two-sided ideal of $A_D$. Suppose first that $L\cap J_D\ne 0$. The simplicity of $J_D$ gives $J_D\subseteq L$. The image of $L$ in $A_D/J_D\cong\C$ is either $0$ or $\C$. In the first case $L=J_D$. In the second case, every element of $A_D$ differs from an element of $L$ by an element of $J_D\subseteq L$, so $L=A_D$.

It remains to exclude $L\cap J_D=0$. Since $L\ne 0$, the ideal $L$ then contains an element
\begin{equation*}
    I_{E_D}+j
\end{equation*}
for some $j \in J_D$, after multiplication by a non-zero scalar. For every $b \in J_D$, both
\begin{equation*}
    (I_{E_D}+j)b
    \qquad\text{and}\qquad
    b(I_{E_D}+j)
\end{equation*}
belong to $L\cap J_D$, and hence vanish. Thus $-j$ is a two-sided identity for $J_D$, a contradiction. The only closed two-sided ideals of $A_D$ are therefore $0$, $J_D$, and $A_D$.

Finally,
\begin{equation*}
    \varepsilon_{11}A_D\varepsilon_{11}=\C\varepsilon_{11}+\varepsilon_{11}J_D\varepsilon_{11} =\varepsilon_{11}J_D\varepsilon_{11} =\theta_{11}(D),
\end{equation*}
because $\varepsilon_{11}=\theta_{11}(1_D)$ already belongs to $\varepsilon_{11}J_D\varepsilon_{11}$. By \eqref{eq:theta-norm}, the map
\begin{equation*}
    \eta\colon D\longrightarrow\varepsilon_{11}A_D\varepsilon_{11},
    \qquad
    \eta(d)=\theta_{11}(d),
\end{equation*}
is an isometric algebra isomorphism.
\end{proof}

We shall also need estimates for the canonical matrix units. They follow directly from the $\ell_1$ norm on $E_D$.

\begin{lemma}[The controlled matrix units in $A_D$]\label{lem:AD-matrix-units}
The family $(\varepsilon_{ij})_{i,j \in \N}\subseteq A_D$ is a system of matrix units, $\varepsilon_{11}\ne 0$, and
\begin{equation*}
    \left\|\sum_{j \in F}a_j\varepsilon_{1j}\right\|
    \leq\sup_{j \in F}|a_j|,
    \qquad
    \sup_{i \in \N}\norm{\varepsilon_{i1}}\leq1
\end{equation*}
for every finite non-empty $F\subseteq \N$ and every scalar family $(a_j)_{j \in F}$.
\end{lemma}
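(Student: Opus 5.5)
The plan is to obtain all three assertions directly from \eqref{eq:theta-relations} and \eqref{eq:theta-norm}, together with an explicit computation on the $\ell_1$ norm of $E_D$. For the matrix-unit relations, we specialise \eqref{eq:theta-relations} to $d=e=1_D$, which gives $\varepsilon_{ij}\varepsilon_{kl}=\delta_{jk}\theta_{il}(1_D)=\delta_{jk}\varepsilon_{il}$. For non-vanishing, \eqref{eq:theta-norm} gives $\norm{\varepsilon_{11}}=\norm{1_D}_D=1\ne0$ in a unital $C^*$-algebra. The same identity \eqref{eq:theta-norm} also gives $\norm{\varepsilon_{i1}}=1$ for every $i$, which settles the column estimate.

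The only step requiring a computation is the row estimate. Fix a finite non-empty $F\subseteq\N$ and scalars $(a_j)_{j\in F}$, and put $S=\sum_{j\in F}a_j\varepsilon_{1j}$. For $x=(x_n)_{n\in\N}\in E_D$, the definition of $\theta_{1j}(1_D)$ shows that $Sx$ is supported in the first coordinate, where it equals $\sum_{j\in F}a_jx_j$. Therefore
\begin{equation*}
    \norm{Sx}=\Bigl\|\sum_{j\in F}a_jx_j\Bigr\|_D\leq\sup_{j\in F}|a_j|\sum_{j\in F}\norm{x_j}_D\leq\sup_{j\in F}|a_j|\,\norm{x},
\end{equation*}
and taking the supremum over $\norm{x}\leq1$ yields the claimed bound. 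The point is that the $\ell_1$ norm on the domain converts a bounded coefficient sequence into a bounded operator.

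I expect no genuine obstacle. The one point needing care is the direction of the duality: a row operator acts on the domain coordinates, so the $\ell_1$ structure there pairs with the $\sup$ of the coefficients. A column operator, by contrast, would spread a single coordinate across many output coordinates. This is why only individual column norms are asserted, and the lemma does not claim a $c_0$ estimate for columns.
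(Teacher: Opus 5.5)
Your proposal is correct and follows the paper's proof essentially verbatim. The matrix-unit relations and $\varepsilon_{11}\ne0$ come from \eqref{eq:theta-relations} and \eqref{eq:theta-norm}, and the row bound is the same first-coordinate computation using the $\ell_1$ norm on $E_D$. The only cosmetic difference is that you read off $\norm{\varepsilon_{i1}}=1$ from \eqref{eq:theta-norm}, whereas the paper checks $\norm{\varepsilon_{i1}x}=\norm{x_1}_D\leq\norm{x}$ directly.
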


\begin{proof}
The matrix-unit relations follow from \eqref{eq:theta-relations}, and $\varepsilon_{11}\ne0$ by \eqref{eq:theta-norm}. Fix a finite non-empty set $F\subseteq\N$ and a scalar family $(a_j)_{j\in F}$. Let $x=(x_n)_{n\in\N}\in E_D$. The vector
\begin{equation*}
    \left(\sum_{j\in F}a_j\varepsilon_{1j}\right)x
\end{equation*}
has first coordinate $\sum_{j\in F}a_jx_j$ and all other coordinates equal to zero. Hence
\begin{equation*}
    \left\|\left(\sum_{j\in F}a_j\varepsilon_{1j}\right)x\right\|
    =\left\|\sum_{j\in F}a_jx_j\right\|_D \leq\sum_{j\in F}|a_j|\norm{x_j}_D \leq\sup_{j\in F}|a_j|\sum_{j\in F}\norm{x_j}_D\leq\sup_{j\in F}|a_j|\norm{x}.
\end{equation*}
Taking the supremum over all $x\in E_D$ with $\norm{x}\leq1$ gives
\begin{equation*}
    \left\|\sum_{j\in F}a_j\varepsilon_{1j}\right\|\leq\sup_{j\in F}|a_j|.
\end{equation*}
Finally, fix $i\in\N$. The operator $\varepsilon_{i1}$ moves the first coordinate to the $i$th coordinate and annihilates all the other coordinates. Thus
\begin{equation*}
    \norm{\varepsilon_{i1}x}=\norm{x_1}_D\leq\norm{x}
    \qquad(x\in E_D),
\end{equation*}
so $\norm{\varepsilon_{i1}}\leq1$ and hence $\sup_{i\in\N}\norm{\varepsilon_{i1}}\leq1$.
\end{proof}

We can now prove the main theorem.

\begin{proof}[Proof of \Cref{thm:main}]
Let $D$ be the algebra from \Cref{thm:HK}, and put
\begin{equation*}
    A=A_D.
\end{equation*}
By \Cref{rem:Cstar-simple} and \Cref{prop:ideal-structure}, the algebra $A$ has exactly one non-zero proper closed two-sided ideal, namely $J_D$, and the corner
\begin{equation*}
    pAp,
    \qquad
    p=\varepsilon_{11},
\end{equation*}
is isomorphic to $D$.

The map
\begin{equation*}
    A\longrightarrow pAp,
    \qquad
    a\longmapsto pap,
\end{equation*}
is a continuous surjection. Hence
\begin{equation*}
    \mathfrak c=\dens(D)=\dens(pAp)\leq\dens(A).
\end{equation*}
Conversely, choose a dense subset $M\subseteq D$ with $|M|=\mathfrak c$, and put
\begin{equation*}
    \F_0=\mathbb Q+i\mathbb Q.
\end{equation*}
The set of finite $\F_0$-linear combinations of
\begin{equation*}
    I_{E_D}
    \quad\text{and}\quad
    \theta_{ij}(d)
    \qquad(i,j \in \N,\ d \in M)
\end{equation*}
is dense in $A$ and has cardinality at most $\mathfrak c$. Thus
\begin{equation*}
    \dens(A)\leq\mathfrak c,
\end{equation*}
and consequently $\dens(A)=\mathfrak c$.

Fix either choice $\Jop=\A$ or $\Jop=\K$. Suppose, towards a contradiction, that there is a Banach-algebra isomorphism
\begin{equation*}
    \Phi\colon A\longrightarrow\Q(X)
\end{equation*}
for some Banach space $X$. If $X$ is finite-dimensional, then
\begin{equation*}
    \A(X)=\K(X)=\B(X)
\end{equation*}
and $\Q(X)=0$, which is impossible because $A\ne0$. We may therefore assume that $X$ is infinite-dimensional.

First suppose that $X$ is separable. The element $\Phi(p)$ is an idempotent in $\Q(X)$. By \Cref{lem:calkin-corners}, there is a separable complemented subspace $Y\subseteq X$ such that
\begin{equation*}
    D \cong pAp \cong \Phi(p)\Q(X)\Phi(p) \cong\Q(Y).
\end{equation*}
This contradicts \Cref{thm:HK}.

Now suppose that $X$ is nonseparable, and put
\begin{equation*}
    \mathcal R_X=\Ssep(X)/\Jop(X)\subseteq\Q(X).
\end{equation*}
By \Cref{lem:separable-range-ideal}, $\mathcal R_X$ is a proper closed two-sided ideal of $\Q(X)$. Its inverse image under $\Phi$ is therefore a proper closed two-sided ideal of $A$, and so
\begin{equation*}
    \Phi^{-1}(\mathcal R_X)=0
    \qquad\text{or}\qquad
    \Phi^{-1}(\mathcal R_X)=J_D.
\end{equation*}

Suppose first that
\begin{equation*}
    \Phi^{-1}(\mathcal R_X)=J_D.
\end{equation*}
Since $p\in J_D$, we have $\Phi(p)\in\mathcal R_X$. By \Cref{lem:separable-range-corner}, there is a separable complemented subspace $Y\subseteq X$ such that
\begin{equation*}
    D \cong pAp \cong \Phi(p)\Q(X)\Phi(p) \cong\Q(Y).
\end{equation*}
This again contradicts \Cref{thm:HK}.

We are left with
\begin{equation*}
    \Phi^{-1}(\mathcal R_X)=0.
\end{equation*}
Since $\Phi$ is onto, it follows that $\mathcal R_X=0$, and hence
\begin{equation*}
    \Ssep(X)=\Jop(X).
\end{equation*}
By \Cref{lem:AD-matrix-units}, the family $(\Phi(\varepsilon_{ij}))_{i,j \in \N}$ satisfies the matrix-unit relations and has non-zero $(1,1)$ entry. Moreover, for every finite non-empty $F\subseteq \N$ and every scalar family $(a_j)_{j \in F}$,
\begin{equation*}
    \left\|\sum_{j \in F}a_j\Phi(\varepsilon_{1j})\right\|
    \leq\norm{\Phi}\left\|\sum_{j \in F}a_j\varepsilon_{1j}\right\|\leq\norm{\Phi}\sup_{j \in F}|a_j|,
\end{equation*}
and
\begin{equation*}
    \sup_{i \in \N}\norm{\Phi(\varepsilon_{i1})}\leq\norm{\Phi}.
\end{equation*}
Thus the hypotheses of \Cref{prop:matrix-obstruction} hold, a contradiction.

Both possibilities lead to a contradiction. Therefore $A$ is not isomorphic to $\Q(X)$ for any Banach space $X$. Since the choice of $\Jop$ was arbitrary and the algebra $A=A_D$ is independent of that choice, this same $A$ is isomorphic to neither $\B(X)/\A(X)$ nor $\B(X)/\K(X)$ for any Banach space $X$.
\end{proof}

\bigskip

\noindent\textbf{Acknowledgements.} This paper forms part of the first-named author's PhD research at Lancaster University, conducted under the supervision of Professor N. J. Laustsen. The first-named author gratefully acknowledges the support of the EPSRC (grant number EP/W524438/1). We are thankful to Professor Laustsen for his insightful comments and valuable suggestions regarding some points raised in this manuscript. \medskip

\noindent\textbf{AI usage statement.} The idea for the present note arose from previous work of the authors \cite{AcuavivaAcuaviva2026Wild}, and we refer to that paper for a detailed account of AI use. The present paper combines the obstruction found there with an algebra of Horv\'ath and Kania. The authors take full responsibility for the mathematical contents and correctness of the paper. \medskip

For the purpose of open access, the first-named author has applied a Creative Commons Attribution (CC BY) licence to any Author Accepted Manuscript version arising.

\end{document}